\newcommand{\Z}{\mathbb{Z}}
\newcommand{\C}{\mathbb{C}}
\newcommand{\dps}{\displaystyle}
\newcommand{\pp}{\mathbb{P}}
\newcommand{\oo}{\mathcal{O}}
\newcommand{\gs}{\mathfrak{S}}
\newcommand{\Sc}{\mathbb{S}}
\newcommand{\Lb}{\mathcal{L}}
\newcommand{\Mb}{\mathcal{M}}
\newcommand{\Nb}{\mathcal{N}}
\newcommand{\fl}{\mathcal{F}\!\ell}
\newcommand{\hooklongrightarrow}{\lhook\joinrel\longrightarrow}
\DeclareMathOperator{\id}{Id}
\DeclareMathOperator{\gl}{GL}
\DeclareMathOperator{\h}{H}
\DeclareMathOperator{\Hom}{Hom}
\newtheorem{theo}{Theorem}[section]%ou tous en français
\newtheorem{prop}[theo]{Proposition}
\newtheorem{lemma}[theo]{Lemma}
\newtheorem{cor}[theo]{Corollary}
\theoremstyle{definition}
\newtheorem{de}[theo]{Definition}
\newtheorem{rmk}[theo]{Remark}
\newenvironment{changemargin}[2]{%
\begin{list}{}{%
\setlength{\topsep}{0pt}%
\setlength{\leftmargin}{#1}%
\setlength{\rightmargin}{#2}%
\setlength{\listparindent}{\parindent}%
\setlength{\itemindent}{\parindent}%
\setlength{\parsep}{\parskip}%
}%
\item[]}{\end{list}}
\title{The Heisenberg product seen as a branching problem for connected reductive groups, stability properties}
\author{Maxime Pelletier\thanks{Univ Lyon, Université Claude Bernard Lyon 1, CNRS UMR 5208, Institut Camille Jordan, 43 blvd. du 11 novembre 1918, F-69622 Villeurbanne cedex, France (\texttt{pelletier@math.univ-lyon1.fr})}}
\date{}
\begin{document}
%\maketitle
%\begin{abstract}
%\end{abstract}

\maketitle

\begin{abstract}
In this article we study, in the context of complex representations of symmetric groups, some aspects of the Heisenberg product, introduced by Marcelo Aguiar, Walter Ferrer Santos, and Walter Moreira in 2017. When applied to irreducible representations, this product gives rise to the Aguiar coefficients. We prove that these coefficients are in fact also branching coefficients for representations of connected complex reductive groups. This allows to use geometric methods already developped in a previous article, notably based on notions from Geometric Invariant Theory, and to obtain some stability results on Aguiar coefficients, generalising some of the results concerning them given by Li Ying.
\end{abstract}

\section{Introduction}

The Heisenberg product was first introduced by Marcelo Aguiar, Walter Ferrer Santos, and Walter Moreira in \cite{aguiar-ferrersantos-moreira} in order to simplify and unify a diversity of related products and coproducts (e.g. Hadamard, Cauchy,
Kronecker, induction, internal, external, Solomon, composition, Malvenuto–Reutenauer, convolution...) defined on various objects: species, representations of symmetric groups, symmetric functions, endomorphisms of graded connected Hopf algebras, permutations, non–commutative symmetric functions, quasi–symmetric functions...

\vspace{5mm}

In this paper we are only interested in one of these contexts where they thus defined this Heisenberg product, namely the one of complex representations\footnote{Every representation considered throughout the article will be a complex vector space.} of symmetric groups. In this particular context, let us fix throughout the whole article two positive integers $k$ and $l$. Then we denote by $\gs_k$ (resp. $\gs_l$) the symmetric group formed by the permutations of the finite set $\{1,\dots,k\}$ (resp. $\{1,\dots,l\}$), and the Heisenberg product can be defined on two complex representations $V$ and $W$ of $\gs_k$ and $\gs_l$ respectively. It is denoted by $V\sharp W$ and is a direct sum of representations of the groups $\gs_i$ for $i$ from $\max(k,l)$ to $k+l$. We explain precisely the construction of this product in Section \ref{section_construction}.

\vspace{5mm}

One interesting thing to notice about this product is that, when $k=l$, $V\sharp W$ is a direct sum of representations of $\gs_k,\gs_{k+1},\dots,\gs_{2k}$, and the term in this direct sum which is a representation of $\gs_k$ corresponds simply to the tensor product $V\otimes W$ seen as a $\gs_k$-module (with $\gs_k$ acting diagonally). This tensor product of representations of $\gs_k$ is sometimes referred to as the ``Kronecker product'', since it gives rise to the Kronecker coefficients when applied to irreducible $\gs_k$-modules. As a consequence the Heisenberg product extends -- in a certain way -- this so-called Kronecker product.

\vspace{5mm}

An important point that we use concerning the representation theory of the symmetric groups is that the irreducible complex representations of a group $\gs_k$ are known: they are in bijection with the partitions of the integer $k$ and one can moreover construct them. If $\lambda$ is a partition of $k$ (denoted $\lambda\vdash k$), i.e. a finite non-increasing sequence $(\lambda_1\geq\lambda_2\geq\dots\geq\lambda_n>0)$ of positive integers (called parts) whose sum (sometimes called size of the partition, and denoted by $|\lambda|$) is $k$, there is an explicite construction -- several, in fact -- giving a representation of $\gs_k$ over the field $\C$ of complex numbers, which happens to be irreducible. We denote this $\gs_k$-module by $M_\lambda$. We do not detail the construction of $M_\lambda$ here: two examples of such can for instance be found in \cite[Chapter 4]{lakshmibai-brown}.

\vspace{5mm}

Considering that every complex representation of a symmetric group decomposes as a direct sum of irreducible ones, it is natural to seek to understand the Heisenberg product of two of the latter. If $\lambda$ and $\mu$ are respectively partitions of $k$ and $l$, the Heisenberg product $M_\lambda\sharp M_\mu$ is a direct sum of $\gs_i$-modules for $i\in\{\max(k,l),\dots,k+l\}$, and then every term in this sum decomposes as a direct sum of irreducible $\gs_i$-modules:
\[ M_\lambda\sharp M_\mu=\bigoplus_{i=\max(k,l)}^{k+l}\bigoplus_{\nu\vdash i}M_\nu^{\oplus a_{\lambda,\mu}^\nu}. \]
The multiplicities $a_{\lambda,\mu}^\nu$ in these decompositions are non-negative integers which are called the Aguiar coefficients. They were introduced in \cite{ying} by Li Ying, who also proved interesting stability results concerning them. We recall these results in Section \ref{section_results_ying}.

\vspace{5mm}

The fact is that Li Ying's stability results look very much like similar results already proven concerning Kronecker coefficients. Let us recall that these particular coefficients are the multiplicities $g_{\lambda,\mu,\nu}$ arising in the following decomposition:
\[ M_\lambda\otimes M_\mu=\bigoplus_{\nu\vdash k}M_\nu^{\oplus g_{\lambda,\mu,\nu}}, \]
where $\lambda$ and $\mu$ are partitions of $k$. In \cite{article1} we exposed some geometric methods allowing to prove stability results for those, as well as for some other similar coefficients. In fact these techniques can be applied as soon as we are looking at branching coefficients for complex connected reductive groups: if $G$ and $\hat{G}$ are two such groups and if we have a morphism $G\rightarrow\hat{G}$, the branching problem consists in seeing irreducible complex representations of $\hat{G}$ as $G$-modules via the previous morphism and in wondering how as such they decompose as a direct sum of irreducible ones. As a consequence, in Section \ref{section_aguiar_as_branching}, we express the Aguiar coefficients as such branching coefficients, obtaining the following result.

\begin{theo}
If $V_1$ and $V_2$ are finite dimensional complex vector spaces (of large enough dimensions), then the Aguiar coefficients are the branching coefficients for the groups $G=\gl(V_1)\times\gl(V_2)$ and $\hat{G}=\gl\big(V_1\oplus(V_1\otimes V_2)\oplus V_2\big)$, and the morphism $\varphi:G\rightarrow\hat{G}$ given by $(g_1,g_2)\mapsto\varphi_{g_1,g_2}$, where
\[ \begin{array}{rccl}
\varphi_{g_1,g_2}: & V_1\oplus(V_1\otimes V_2)\oplus V_2 & \longrightarrow & V_1\oplus(V_1\otimes V_2)\oplus V_2\\
 & u_1+(\sum_i v^{(i)}_1\otimes v^{(i)}_2)+u_2 & \longmapsto & g_1(u_1)+(\sum_i g_1(v^{(i)}_1)\otimes g_2(v^{(i)}_2))+g_2(u_2)
\end{array}. \]
\end{theo}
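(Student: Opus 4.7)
The natural bridge between branching for $\gl$ and representations of the symmetric groups is Schur--Weyl duality. The plan is to compute, independently, the multiplicity of $S_\lambda(V_1)\otimes S_\mu(V_2)$ in the restriction of $S_\nu(W)$ to $G$ (where $W=V_1\oplus(V_1\otimes V_2)\oplus V_2$) and the Aguiar coefficient $a^\nu_{\lambda,\mu}$ coming from the definition of $M_\lambda\sharp M_\mu$, then to verify that the two combinatorial sums agree term by term. I would assume that $\dim V_1$ and $\dim V_2$ are at least $k+l$, which is enough for every Schur module involved to be nonzero and for Schur--Weyl duality to apply without loss.

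For the branching side, I would pass through the tensor power $W^{\otimes i}$. As a $\gs_i\times\gl(W)$-module, Schur--Weyl duality gives $W^{\otimes i}\cong\bigoplus_{\nu\vdash i} M_\nu\otimes S_\nu(W)$, so the branching coefficient I want is precisely the multiplicity of $M_\nu\otimes S_\lambda(V_1)\otimes S_\mu(V_2)$ in $W^{\otimes i}$ viewed as a $\gs_i\times G$-module via $\varphi$. I would then compute the same tensor power a second way, by expanding $W=V_1\oplus(V_1\otimes V_2)\oplus V_2$ and grouping tensor words by their composition type $(a,b,c)$ with $a+b+c=i$:
\[ W^{\otimes i}=\bigoplus_{a+b+c=i}\text{Ind}_{\gs_a\times\gs_b\times\gs_c}^{\gs_i}\bigl(V_1^{\otimes a}\otimes (V_1\otimes V_2)^{\otimes b}\otimes V_2^{\otimes c}\bigr). \]
Applying Schur--Weyl separately to $V_1^{\otimes a}$, $V_2^{\otimes c}$, and to $(V_1\otimes V_2)^{\otimes b}\cong V_1^{\otimes b}\otimes V_2^{\otimes b}$ with the diagonal $\gs_b$-action (which produces Kronecker coefficients $g_{\lambda',\mu',\tau}$), then using Littlewood--Richardson both to evaluate the induction product of the three symmetric-group factors and to fuse the Schur factors on each $V_i$, produces an explicit expression for the branching multiplicity as a sum over $(\alpha,\tau,\gamma,\lambda',\mu')$ of products of the form $c^\nu_{\alpha,\tau,\gamma}\,g_{\lambda',\mu',\tau}\,c^\lambda_{\alpha,\lambda'}\,c^\mu_{\mu',\gamma}$.

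For the Aguiar side, I would apply the definition of the Heisenberg product (as recalled in Section~\ref{section_construction}) to $M_\lambda$ and $M_\mu$: for $j=k+l-i$, the component $(M_\lambda\sharp M_\mu)_i$ is obtained by restricting $M_\lambda$ from $\gs_k$ to $\gs_{k-j}\times\gs_j$, restricting $M_\mu$ from $\gs_l$ to $\gs_j\times\gs_{l-j}$, tensoring the two over the diagonal $\gs_j$, and inducing to $\gs_i$. The branching rule for symmetric groups gives Littlewood--Richardson coefficients for the two restrictions, the Kronecker coefficients appear when decomposing the diagonal tensor product $M_{\lambda'}\otimes M_{\mu'}$ as a $\gs_j$-module, and Littlewood--Richardson appears once more for the induction. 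The resulting expression for $a^\nu_{\lambda,\mu}$ matches the branching formula obtained above upon identifying $j=b$ and $\tau=\beta$, which closes the argument.

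The only real obstacle is bookkeeping: ensuring that every combinatorial factor lands in the right place and that the summation ranges match. Conceptually, the content is essentially the observation that $W^{\otimes i}$ simultaneously encodes the Heisenberg product on the $\gs_i$-side and the branching decomposition on the $\gl$-side. One could alternatively conduct the whole proof on the Frobenius-characteristic side, rephrasing everything as an identity between symmetric functions, but working directly with modules keeps the structural explanation transparent.
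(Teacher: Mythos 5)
Your proposal is correct and ultimately establishes the same combinatorial identity as the paper, but by a genuinely different route. The paper's proof takes two things as black boxes: Ying's formula (Proposition~\ref{prop_expr_aguiar}, expressing $a_{\lambda,\mu}^\nu$ as a sum of products of Littlewood--Richardson and Kronecker coefficients) and the $\gl$-branching rules (Lemma~\ref{lemma_branching_rules}); it then peels off $\Sc^\nu\big(V_1\oplus(V_1\otimes V_2)\oplus V_2\big)$ stepwise using Lemma~\ref{lemma_branching_rules} and matches the resulting multiplicity against Ying's formula term by term. You instead work inside the single bimodule $W^{\otimes i}$, with $W=V_1\oplus(V_1\otimes V_2)\oplus V_2$: on one side its Schur--Weyl decomposition together with the composition-type expansion over $(a,b,c)$ produces the $\gl(V_1)\times\gl(V_2)$-branching multiplicity, while on the other side you re-derive the expression for $a_{\lambda,\mu}^\nu$ directly from the definition of the Heisenberg product (restrict, tensor over the diagonal $\gs_j$, induce), which in effect reproves Ying's lemma rather than citing it. The two sums then agree after the obvious identification of summation variables (your $\lambda',\mu',\tau,\gamma$ correspond to the paper's $\beta,\eta,\delta,\rho$, and the iterated Littlewood--Richardson coefficient absorbs the intermediate partition). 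What the paper's route buys is brevity and modularity, since it leans on an already-published identity; what your route buys is a self-contained argument and the clean conceptual point, which the paper does not make explicit, that $W^{\otimes i}$ as a $\gs_i\times\big(\gl(V_1)\times\gl(V_2)\big)$-module simultaneously encodes the $i$-th graded piece of the Heisenberg product and the $\nu$-isotypic component of the branching decomposition. The bookkeeping you defer is genuinely routine, and the hypothesis $\dim V_1,\dim V_2\geq k+l$ does ensure Schur--Weyl duality applies at every step.
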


As a consequence we can use in Section \ref{section_new_stability_results} the same methods as in \cite{article1}, and obtain some new stability results, generalising in part those of Li Ying. The main one is:

\begin{theo}
Let $\alpha$, $\beta$, and $\gamma$ be three partitions such that, for all $d\in\Z_{>0}$, $a_{d\alpha,d\beta}^{d\gamma}=1$. Then, for all triple $(\lambda,\mu;\nu)$ of partitions, the sequence $(a_{\lambda+d\alpha,\mu+d\beta}^{\nu+d\gamma})_{d\in\Z_{\geq 0}}$ is constant for $d\gg 0$.
\end{theo}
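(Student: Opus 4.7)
The plan is to combine the previous theorem with the GIT--based stability machinery developed in the author's earlier article \cite{article1}. By the previous theorem, once $\dim V_1$ and $\dim V_2$ are large enough in terms of the sizes of the partitions involved, every Aguiar coefficient $a^{\nu}_{\lambda,\mu}$ equals a branching multiplicity for the fixed morphism $\varphi:G\to\hat{G}$. This realises the sequence $\bigl(a^{\nu+d\gamma}_{\lambda+d\alpha,\mu+d\beta}\bigr)_{d\geq 0}$ as a sequence of branching multiplicities along a ray in the weight lattice of $G\times\hat{G}$, placing the question squarely in the framework of \cite{article1}.

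The central input from \cite{article1} is a stability criterion of the following shape: for an arbitrary morphism between connected reductive complex groups, if the branching multiplicity is equal to $1$ along the whole ray generated by a triple $(\alpha,\beta;\gamma)$ of dominant weights, then for every other triple $(\lambda,\mu;\nu)$ the translated multiplicities are eventually constant. Geometrically, the multiplicity--one hypothesis means that the space of $G$--invariant sections of the corresponding line bundle $\Lb_{d(\alpha,\beta;\gamma)}$ on a product of flag varieties for $G$ and $\hat{G}$ is one--dimensional for every $d>0$, so that the associated GIT quotient is reduced to a point and the ray lies on a face of the branching cone along which the multiplicity function is particularly well behaved. Applying this criterion to the morphism $\varphi$ and to the triple $(\alpha,\beta;\gamma)$ of the statement then directly yields the conclusion.

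The main difficulty is twofold. First, one has to make sure that the dimensions of $V_1$ and $V_2$ can be chosen large enough \emph{uniformly in $d$}: this requires checking that the identification furnished by the previous theorem is stable as soon as the dimensions exceed an explicit bound depending only on the fixed partitions $\alpha,\beta,\gamma,\lambda,\mu,\nu$ and not on $d$. Since the Aguiar coefficients themselves do not depend on the dimension of the ambient vector spaces, this should be natural, but it requires some control on the number of parts of the partitions $\lambda+d\alpha$, $\mu+d\beta$, $\nu+d\gamma$. Second, and more delicate, one has to verify that the geometric hypotheses of the stability theorem from \cite{article1} really apply to the specific branching problem given by $\varphi$; this is the core technical point, and amounts to analysing the flag varieties of $\hat{G}$, viewed as $G$--varieties via $\varphi$, and controlling the associated GIT quotients along the ray $d(\alpha,\beta;\gamma)$.
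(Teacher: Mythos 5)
Your plan is essentially the paper's proof: use Theorem~\ref{theorem_aguiar_branching} (via Corollary~\ref{cor_aguiar_sections_line_bundle}) to write $a_{\lambda+d\alpha,\mu+d\beta}^{\nu+d\gamma}=\dim\h^0\big(X,\Mb\otimes\Lb^{\otimes d}\big)^G$ with $X=\fl(V_1)\times\fl(V_2)\times\fl\big(V_1\oplus(V_1\otimes V_2)\oplus V_2\big)$, and then run the GIT machinery from \cite{article1} (Guillemin--Sternberg, the eventual inclusion $X^{ss}(\Mb\otimes\Lb^{\otimes d})\subset X^{ss}(\Lb)$, and Luna's \'etale slice theorem applied to $X^{ss}(\Lb)$ using the multiplicity-one hypothesis). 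Your two flagged ``difficulties'' are not obstacles: the lengths of $\lambda+d\alpha$, $\mu+d\beta$, $\nu+d\gamma$ are bounded independently of $d$, so one fixed choice of $\dim V_1,\dim V_2$ suffices for all $d$, and the geometric hypotheses of \cite{article1} are automatic once the coefficients are realised as invariant sections on a product of flag varieties of reductive groups, which is exactly what Corollary~\ref{cor_aguiar_sections_line_bundle} delivers.
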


In fact Li Ying obtained the same conclusion as in the previous theorem, for the triple $(\alpha,\beta;\gamma)=\big((1),(1);(1)\big)$. We call the triples satisfying the same property ``Aguiar-stable triples'', and we give -- also in Section \ref{section_new_stability_results} -- four other explicit examples of such ones.

\begin{prop}
The triples
\[ \big((2),(1);(2)\big),\hphantom{a}\big((2),(1);(1,1)\big),\hphantom{a}\big((2),(1);(3)\big),\text{ and }\big((2),(1);(2,1)\big) \]
are all Aguiar-stable.
\end{prop}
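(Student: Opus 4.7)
The plan is to verify, for each of the four triples $(\alpha,\beta;\gamma)$ listed (all with $\alpha=(2)$ and $\beta=(1)$), that $a_{(2d),(d)}^{d\gamma}=1$ for every $d\in\Z_{>0}$; this is by definition the condition for being Aguiar-stable, so the proposition will follow without needing to invoke the main stability theorem. By Theorem 1, for $V_1,V_2$ of large enough dimension, this coefficient is the multiplicity of $S_{(2d)}(V_1)\otimes S_{(d)}(V_2)$ inside $S_{d\gamma}(W)$ with $W=V_1\oplus(V_1\otimes V_2)\oplus V_2$.

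My approach is to compute this branching multiplicity by combining two classical decompositions. The formula for a Schur functor of a direct sum gives
\[ S_{d\gamma}(W)=\bigoplus_{\nu^{(1)},\nu^{(3)},\nu^{(2)}}c_{\nu^{(1)},\nu^{(3)},\nu^{(2)}}^{d\gamma}\,S_{\nu^{(1)}}(V_1)\otimes S_{\nu^{(3)}}(V_1\otimes V_2)\otimes S_{\nu^{(2)}}(V_2), \]
where the coefficients are (iterated) Littlewood--Richardson multiplicities. Each factor $S_{\nu^{(3)}}(V_1\otimes V_2)$ then expands under $\gl(V_1)\times\gl(V_2)$ as $\bigoplus_{\rho,\sigma}g_{\rho,\sigma,\nu^{(3)}}\,S_\rho(V_1)\otimes S_\sigma(V_2)$, where $g_{\rho,\sigma,\nu^{(3)}}$ is the usual Kronecker coefficient. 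Extracting the coefficient of $S_{(2d)}(V_1)\otimes S_{(d)}(V_2)$ expresses $a_{(2d),(d)}^{d\gamma}$ as a sum indexed by the five partitions $\nu^{(1)},\nu^{(2)},\nu^{(3)},\rho,\sigma$.

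This sum collapses dramatically because $(2d)$ and $(d)$ are single rows. The Pieri rule forces $\nu^{(1)},\rho$ (resp.\ $\nu^{(2)},\sigma$) to be single rows summing to $(2d)$ (resp.\ $(d)$), and since a Kronecker coefficient whose first argument is a trivial (single-row) symmetric group representation is a Kronecker delta, one has $g_{(a),\sigma,\nu^{(3)}}=\delta_{\sigma,\nu^{(3)}}$, so $\nu^{(3)}=\sigma=(a)$. Hence
\[ a_{(2d),(d)}^{d\gamma}=\sum_{a\geq 0}c_{(2d-a),(d-a),(a)}^{d\gamma}, \]
a sum of Littlewood--Richardson coefficients involving only single-row partitions.

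For each of $d\gamma\in\{(2d),(d,d),(3d),(2d,d)\}$, the size equality $3d-a=|d\gamma|$ leaves a unique admissible value of $a$ (namely $a=d$ for $\gamma\in\{(2),(1,1)\}$ and $a=0$ for $\gamma\in\{(3),(2,1)\}$), and in each case a single application of Pieri's rule shows the surviving coefficient equals $1$. The only obstacle is the case-by-case bookkeeping, in particular making sure that no further single-row triple escapes the analysis; each verification is then entirely elementary.
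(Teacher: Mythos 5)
Your computational approach is essentially the paper's: both reduce the problem to showing $a_{d\alpha,d\beta}^{d\gamma}=1$ for all $d>0$, and both do so by collapsing the same quintuple-sum formula for Aguiar coefficients in terms of Littlewood--Richardson and Kronecker coefficients (you re-derive it from Theorem~\ref{theorem_aguiar_branching} via iterated branching, the paper invokes Proposition~\ref{prop_expr_aguiar} directly, but the formulas coincide after relabelling). The collapse mechanism --- the Pieri rule forcing single rows, then the fact that a Kronecker coefficient with a one-row argument is a delta --- is also identical. Your version is slightly tidier in that you reach the clean identity $a_{(2d),(d)}^{d\gamma}=\sum_{a}c_{(2d-a),(d-a),(a)}^{d\gamma}$ which treats all four choices of $\gamma$ uniformly (each is killed down to a single Pieri coefficient by the size constraint $a=d(3-|\gamma|)$), whereas the paper works out $\gamma=(2)$ in detail and says the others are similar. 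This is a genuine, if modest, gain in transparency.

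However, there is a real gap in your framing. You write that $a_{d\alpha,d\beta}^{d\gamma}=1$ for all $d>0$ ``is by definition the condition for being Aguiar-stable, so the proposition will follow without needing to invoke the main stability theorem.'' That is not the definition. Aguiar-stability of $(\alpha,\beta;\gamma)$ means (by the paper's Definition) that $a_{\alpha,\beta}^\gamma\neq0$ \emph{and} that for every triple $(\lambda,\mu;\nu)$ the sequence $\big(a_{\lambda+d\alpha,\mu+d\beta}^{\nu+d\gamma}\big)_{d\geq0}$ is eventually constant. That the single-orbit condition $a_{d\alpha,d\beta}^{d\gamma}=1$ implies this much stronger universally-quantified statement is exactly the content of Theorem~\ref{thm_stab_aguiar}, and its proof is nontrivial (GIT, Luna's slice theorem). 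Your computation is correct and sufficient, but only because Theorem~\ref{thm_stab_aguiar} then applies; you cannot dispense with it. Once you replace ``by definition'' with ``by Theorem~\ref{thm_stab_aguiar}'', the argument is sound.
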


Finally, in Section \ref{section_bounds_stabilisation} we discuss about what we call ``bounds of stabilisation'': if $(\alpha,\beta;\gamma)$ is Aguiar-stable, such a bound is a non-negative integer $d_0$ (depending on partitions $\lambda$, $\mu$, and $\nu$) such that the sequence $(a_{\lambda+d\alpha,\mu+d\beta}^{\nu+d\gamma})_{d\in\Z_{\geq 0}}$ is constant for $d\geq d_0$. The geometric methods from \cite{article1} can here also give means to compute some bounds of stabilisation. We detail the computation for the Aguiar-stable triples $\big((1),(1);(1)\big)$, $\big((2),(1);(2)\big)$ and $\big((2),(1);(3)\big)$.

\vspace{5mm}

\noindent\textit{Acknowledgements:} I would like to thank Nicolas Ressayre for pointing out the article \cite{ying}, and for interesting discussions and advice during the preparation of this article. I also acknowledge support from the French ANR (ANR project ANR-15-CE40-0012).

\section{Definition and first properties of the Heisenberg product}

\subsection{Construction}\label{section_construction}

Let us recall that we fixed, already in the introduction, two positive integers $k$ and $l$, and that we consider the two symmetric groups $\gs_k$ and $\gs_l$.

\begin{rmk}
Notice that, for all non-negative integers $a$ and $b$, $\gs_a\times\gs_b$ can naturally be seen as a subgroup of $\gs_{a+b}$, thanks to the injective group morphism
\[ \begin{array}{rccl}
\iota_{a,b}: & \gs_a\times\gs_b & \hooklongrightarrow & \gs_{a+b}\\
 & (\sigma_a,\sigma_b) & \longrightarrow & i\mapsto\left\lbrace\begin{array}{ll}
 \sigma_a(i) & \text{if }i\in\{1,\dots,a\}\\
 a+\sigma_b(i-a) & \text{if }i\in\{a+1,\dots,a+b\}
 \end{array}\right.
\end{array}. \]
Furthermore, for any non-negative integer $a$, $\gs_a$ can be considered as a subgroup of $\gs_a\times\gs_a$ through the diagonal embedding $\Delta_a:\gs_a\hookrightarrow\gs_a\times\gs_a$.
\end{rmk}

\begin{de}
Let $V$ and $W$ be two (complex) representations of $\gs_k$ and $\gs_l$ respectively. Let $i\in\{\max(k,l),\dots,k+l\}$. One has the following diagram:
\begin{center}
\begin{tikzpicture}
\node (un) at (0,0) {$\gs_{i-l} \times \gs_{k+l-i} \times \gs_{i-k}$};
\node (deux) at (8,-2) {$\gs_i$};
\node (trois) at (0,2) {$\gs_{i-l} \times \gs_{k+l-i} \times \gs_{k+l-i} \times \gs_{i-k}$};
\node (quatre) at (8,2) {$\gs_k \times \gs_l$};
\node (cinq) at (0,-2) {$\gs_{i-l}\times\gs_l$};
\draw[right hook->] (un)--(cinq);
\draw[right hook->] (cinq)--(deux);
\draw[right hook->] (un)--(trois);
\draw[right hook->] (trois)--(quatre);
\node at (-1.8,1) {$\scriptstyle{\id_{\gs_{i-l}}\times \Delta_{k+l-i}\times \id_{\gs_{i-k}}}$};
\node at (4.4,-1.8) {$\scriptstyle{\iota_{i-l,l}}$};
\node at (5,2.2) {$\scriptstyle{\iota_{i-l,k+l-i}\times\iota_{k+l-i,i-k}}$};
\node at (-1.4,-1) {$\scriptstyle{\id_{\gs_{i-l}}\times\iota_{k+l-i,i-k}}$};
\end{tikzpicture}
\end{center}
We then consider $V\otimes W$, which is a representation of $\gs_k \times \gs_l$, and its restriction $\mathrm{Res}^{\gs_k\times\gs_l}_{\gs_{i-l} \times \gs_{k+l-i} \times \gs_{i-k}}(V\otimes W)$ to a representation of $\gs_{i-l} \times \gs_{k+l-i} \times \gs_{i-k}$. Finally we define $(V\sharp W)_i$ as the representation induced to $\gs_i$ from $\mathrm{Res}^{\gs_k\times\gs_l}_{\gs_{i-l} \times \gs_{k+l-i} \times \gs_{i-k}}(V\otimes W)$. It is then an $\gs_i$-module, and the Heisenberg product of $V$ and $W$ is
\[ V\sharp W=\bigoplus_{i=\max(k,l)}^{k+l}(V\sharp W)_i. \]
\end{de}

\noindent A remarkable thing proven in \cite{aguiar-ferrersantos-moreira} is that this product is associative.

\begin{de}
Let $\lambda\vdash k$ and $\mu\vdash l$. The Heisenberg product between the associated irreducible representations of the symmetric group decomposes as:
\[ M_\lambda\sharp M_\mu=\bigoplus_{i=\max(k,l)}^{k+l}\bigoplus_{\nu\vdash i}M_\nu^{\oplus a_{\lambda,\mu}^\nu}. \]
The coefficients $a_{\lambda,\mu}^\nu$ are called the Aguiar coefficients.
\end{de}

We adopt the convention that, if the weights of the partitions $\lambda$, $\mu$, and $\nu$ are not compatible to define an Aguiar coefficient (i.e. $|\nu|\notin\{\max(|\lambda|,|\mu|),\dots,|\lambda|+|\mu|\}$), then $a_{\lambda,\mu}^\nu=0$. Likewise, if $V$ and $W$ are respectively $\gs_k$- and $\gs_l$-modules and if $i\not\in\{\max(k,l),\dots,k+l\}$ is a positive integer, we will say that $(V\sharp W)_i$ is the trivial $\gs_i$-module $\{0\}$.

\begin{rmk}\label{remark_aguiar=kron}
As written earlier, the Heisenberg product extends the Kronecker one: when $k=l$, the lower term $(V\sharp W)_k$ of $V\sharp W$ is just $V\otimes W$ seen as a representation of $\gs_k$. As a consequence, when the three partitions $\lambda$, $\mu$, and $\nu$ have the same size, the Aguiar coefficient $a_{\lambda,\mu}^\nu$ coincides with the Kronecker coefficient $g_{\lambda,\mu,\nu}$.
\end{rmk}

\subsection{First stability results by Li Ying}\label{section_results_ying}

In this paragraph we recall some results from \cite{ying} concerning the Aguiar coefficients. One can first reformulate its main result (Theorem 2.3) as follows:
\begin{theo}[Ying]\label{main_result_ying}
Let $\lambda$ and $\mu$ be two partitions, and $i\geq\max(|\lambda|,|\mu|)$ be an integer. Then the decomposition of the $\gs_{i+d}$-module $(M_{\lambda+(d)}\sharp M_{\mu+(d)})_{i+d}$ stabilises when $d\geq 3i-|\lambda|-|\mu|-\lambda_1-\mu_1+\lambda_2+\mu_2$. Moreover, the stabilisation begins exactly at this particular integer.
\end{theo}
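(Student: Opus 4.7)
\emph{Proof plan.} The natural approach is to translate the construction of $(V\sharp W)_i$ into the ring of symmetric functions via the Frobenius characteristic map, which sends induction from a Young subgroup $\gs_a\times\gs_b\subset\gs_{a+b}$ to multiplication of Schur functions, restriction along $\iota_{a,b}$ to the coproduct of symmetric functions, and the diagonal $\gs_n$-action on the tensor product of two $\gs_n$-modules to the internal (Kronecker) product $*$. Chasing the three-step definition of $(M_\lambda\sharp M_\mu)_i$ through this dictionary (apply the coproduct separately to $s_\lambda$ and $s_\mu$, take the internal product of the paired middle pieces, and finally multiply the three resulting factors) yields the explicit formula
\[ a_{\lambda,\mu}^\nu = \sum_{\alpha,\beta,\gamma,\delta,\epsilon} c_{\alpha,\beta}^{\lambda}\,c_{\gamma,\delta}^{\mu}\,g_{\beta,\gamma,\epsilon}\,c_{\alpha,\epsilon,\delta}^{\nu}, \]
where $\alpha\vdash i-|\mu|$, $\delta\vdash i-|\lambda|$, $\beta,\gamma,\epsilon\vdash |\lambda|+|\mu|-i$, and $c_{\alpha,\epsilon,\delta}^{\nu}$ denotes the coefficient of $s_\nu$ in the product $s_\alpha s_\epsilon s_\delta$.

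Substituting $(\lambda,\mu;\nu)\mapsto(\lambda+(d),\mu+(d);\nu+(d))$, the partitions $\alpha,\delta$ are unchanged, only $\beta,\gamma,\epsilon\vdash |\lambda|+|\mu|-i+d$ acquire sizes growing linearly with $d$, and $\lambda+(d),\mu+(d),\nu+(d)$ simply gain a long first row. I would then apply two classical stability statements to each summand: the Murnaghan stability of $g_{\beta,\gamma,\epsilon}$ when a common long first row is added to the three arguments, and a Littlewood--Richardson-type stabilisation expressing $c_{\alpha,\beta}^{\lambda+(d)}$ (with $\beta_1$ forced to be large because $\alpha$ has bounded size) in terms of $\alpha$, the truncated part of $\beta$, and the fixed $\lambda$; and symmetrically for the $\mu$-side and for the triple Littlewood--Richardson coefficient $c_{\alpha,\epsilon,\delta}^{\nu+(d)}$. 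Since the truncated middle partitions as well as $\alpha,\delta$ range over finite sets entirely determined by $\lambda,\mu,\nu$, the sum collapses to a finite linear combination of terms that are all eventually constant in $d$, proving the existence of some stability bound.

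The main obstacle is to extract the exact threshold $3i-|\lambda|-|\mu|-\lambda_1-\mu_1+\lambda_2+\mu_2$ and to prove its sharpness. I would carefully determine the precise values of $d$ beyond which each of the LR-stabilisations (on the $\lambda$- and $\mu$-sides) and the Murnaghan stabilisation (on the triple $\beta,\gamma,\epsilon$) become effective: the contributions $\lambda_2-\lambda_1$ and $\mu_2-\mu_1$ should emerge from the two LR-stabilisations, where the growing first parts of $\beta$ and $\gamma$ must exceed linear expressions in $\lambda_1,\lambda_2$ and $\mu_1,\mu_2$ respectively, while the summand $3i-|\lambda|-|\mu|$ reflects the common size $|\lambda|+|\mu|-i+d$ of the three middle partitions at the heart of Murnaghan's theorem. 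The sharpness claim (``the stabilisation begins exactly at this particular integer'') would then be established by exhibiting a specific $\nu\vdash i$ for which a distinguished LR or Kronecker summand in the formula jumps precisely at $d=3i-|\lambda|-|\mu|-\lambda_1-\mu_1+\lambda_2+\mu_2-1$; identifying such a ``critical'' $\nu$ and checking that no other summand compensates the jump is the delicate combinatorial step.
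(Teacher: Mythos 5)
This theorem is Ying's and the paper states it without proof, so there is no in-paper proof to compare against; the paper does, however, record that Ying's proof is ``strongly based'' on the Littlewood--Richardson/Kronecker expression for Aguiar coefficients (Proposition~\ref{prop_expr_aguiar}). Your Frobenius-characteristic derivation correctly reproduces exactly that formula --- your $c_{\alpha,\epsilon,\delta}^{\nu}$ unpacks as $\sum_\tau c_{\alpha,\epsilon}^{\tau}c_{\tau,\delta}^{\nu}$, recovering the paper's indexing --- so your starting point is the right one and presumably tracks Ying's actual argument.

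The gap is that the precise content of the theorem is never established. You correctly observe that $\alpha,\delta$ range over finite sets once $\lambda,\mu,i$ are fixed and that the middle partitions $\beta,\gamma,\epsilon$ are forced to grow a long first row, so that Murnaghan stability and a Littlewood--Richardson stabilisation make every individual summand eventually constant in $d$; this yields a qualitative stabilisation statement. But the theorem asserts a specific threshold, $3i-|\lambda|-|\mu|-\lambda_1-\mu_1+\lambda_2+\mu_2$, together with the claim that stabilisation begins \emph{exactly} there. Extracting the threshold requires making every stabilisation effective --- identifying at which $d$ each of $c_{\alpha,\beta}^{\lambda+(d)}$, $c_{\gamma,\delta}^{\mu+(d)}$, $g_{\beta,\gamma,\epsilon}$, and the outer LR coefficient freezes, and then taking the maximum over the finitely many surviving index tuples --- and sharpness requires producing a $\nu$ and a summand that still changes at $d$ one below the bound, together with a check that no other summand compensates the change. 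You flag both of these yourself as ``the main obstacle'' and ``the delicate combinatorial step'' but carry out neither. As a plan this is correct and aligned with the expected proof; as a proof of the theorem as stated (bound \emph{and} sharpness), it has a genuine gap.
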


\begin{rmk}
In the case when the positive integer $i<\max(|\lambda|,|\mu|)$, the stabilisation of the decomposition of $(M_{\lambda+(d)}\sharp M_{\mu+(d)})_{i+d}$ is trivial: this module is $\{0\}$ for any $d\in\Z_{\geq 0}$.
\end{rmk}

From Theorem \ref{main_result_ying}, one can immediately deduce a stabilisation result for Aguiar coefficients. But this time the bound of stabilisation obtained is not optimal, and Li Ying obtains indeed in \cite[Corollary 5.2]{ying} a better one, refining this stability result for Aguiar coefficients, which can be reformulated in the following way:

\begin{prop}[Ying]\label{bound_ying}
For all partitions $\lambda$, $\mu$, and $\nu$, the sequence $\left(a_{\lambda+(d),\mu+(d)}^{\nu+(d)}\right)_{d\in\Z_{\geq 0}}$ stabilises when $d\geq\dps\frac{1}{2}\big(3|\nu|-|\lambda|-|\mu|-\lambda_1-\mu_1-\nu_1+\lambda_2+\mu_2+\nu_2-1\big)$.
\end{prop}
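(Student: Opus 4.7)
The immediate thing to try is to apply Theorem \ref{main_result_ying} with $i=|\nu|$, which yields stabilisation for $d\geq 3|\nu|-|\lambda|-|\mu|-\lambda_1-\mu_1+\lambda_2+\mu_2$, a bound roughly twice the claimed one. The whole point of the refinement is that we only care about the multiplicity of the specific irreducible $M_{\nu+(d)}$ (which has a very long first row), not of every irreducible of size $|\nu|+d$ appearing in $(M_{\lambda+(d)}\sharp M_{\mu+(d)})_{|\nu|+d}$.

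I would then compute the Aguiar coefficient explicitly via Frobenius reciprocity. Setting $k=|\lambda|+d$, $l=|\mu|+d$, $i=|\nu|+d$, and $H=\gs_{i-l}\times\gs_{k+l-i}\times\gs_{i-k}$, the construction of the Heisenberg product gives
\[ a_{\lambda+(d),\mu+(d)}^{\nu+(d)}=\dim\Hom_H\bigl(\mathrm{Res}^{\gs_k\times\gs_l}_H(M_{\lambda+(d)}\otimes M_{\mu+(d)}),\,\mathrm{Res}^{\gs_i}_H(M_{\nu+(d)})\bigr). \]
Decomposing each restriction with Littlewood--Richardson coefficients, and handling the diagonal embedding $\Delta_{k+l-i}$ with a Kronecker coefficient, this rewrites as
\[ a_{\lambda+(d),\mu+(d)}^{\nu+(d)}=\sum_{\alpha,\beta,\gamma,\delta,\eta} c^{\lambda+(d)}_{\alpha,\beta}\,c^{\mu+(d)}_{\gamma,\delta}\,g_{\beta,\gamma,\eta}\,c^{\nu+(d)}_{\alpha,\eta,\delta}, \]
where $\alpha$ and $\delta$ have the fixed sizes $|\nu|-|\mu|$ and $|\nu|-|\lambda|$, while $\beta$, $\gamma$ and $\eta$ all have size $|\lambda|+|\mu|-|\nu|+d$ that grows linearly with $d$.

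The crucial observation is then that the Littlewood--Richardson factors force $\beta,\gamma,\eta$ to have first rows of order $d$: the long first rows $\lambda_1+d$, $\mu_1+d$, $\nu_1+d$ of $\lambda+(d)$, $\mu+(d)$, $\nu+(d)$ must be absorbed almost entirely by $\beta_1$, $\gamma_1$ and $\eta_1$ up to bounded defects controlled by $\alpha$, $\delta$, and by $\lambda_2,\mu_2,\nu_2$. Writing $\beta=\bar\beta+(d-e_\beta)$ with $\bar\beta$ and $e_\beta$ bounded (and similarly for $\gamma,\eta$), I would invoke the classical stability of Littlewood--Richardson coefficients under adding a common first row to the ``big'' partition and to one small factor, together with Murnaghan's stability theorem applied to $g_{\beta,\gamma,\eta}$. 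Each of these gives an explicit threshold in $d$, and the overall bound of stabilisation is the maximum of these thresholds.

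The main obstacle will be matching \emph{exactly} the constant $\tfrac12\bigl(3|\nu|-|\lambda|-|\mu|-\lambda_1-\mu_1-\nu_1+\lambda_2+\mu_2+\nu_2-1\bigr)$. The factor $\tfrac12$ strongly suggests that the decisive constraint is Murnaghan's bound for $g_{\beta,\gamma,\eta}$, where all three partitions have first rows growing at rate $d$ simultaneously so that the effective Murnaghan stabilisation happens at half-speed compared to the one-sided bound in Theorem \ref{main_result_ying}. Pinning down the $-1$ correction and the precise symmetric combination of $\lambda_i,\mu_i,\nu_i$ will require a careful case analysis of how $\beta_1,\gamma_1,\eta_1$ relate jointly to $d$, $\alpha_1$, $\delta_1$, and the second parts $\lambda_2,\mu_2,\nu_2$; this bookkeeping is where I expect the argument to be genuinely delicate.
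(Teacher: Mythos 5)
Two remarks. First, the paper itself does not prove this statement: it is quoted directly from Ying's work (\cite[Corollary~5.2]{ying}), and the surrounding text only explains that it refines what one would get from Theorem~\ref{main_result_ying}. So there is no in-paper proof to match against. Second, and more importantly, your proposal is a plan rather than a proof: the opening Frobenius-reciprocity computation and the resulting formula in terms of Littlewood--Richardson and Kronecker coefficients are correct (your expression is exactly Proposition~\ref{prop_expr_aguiar} with the nested coefficient $c^{\nu+(d)}_{\alpha,\eta,\delta}$ left unsplit, and your bookkeeping of the fixed sizes $|\nu|-|\mu|$, $|\nu|-|\lambda|$ and the growing size $|\lambda|+|\mu|-|\nu|+d$ is right), but everything after that is announced in the conditional and you explicitly concede that you cannot obtain the stated threshold $\tfrac12\bigl(3|\nu|-|\lambda|-|\mu|-\lambda_1-\mu_1-\nu_1+\lambda_2+\mu_2+\nu_2-1\bigr)$, calling the needed bookkeeping ``genuinely delicate''. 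Since the exact bound is the entire content of the proposition, this is a real gap, not a detail.

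To turn the sketch into a proof you would need, at minimum, to make precise (i) how the first rows of $\beta,\gamma,\eta$ are bounded below in terms of $d$, $\alpha$, $\delta$ and the second rows $\lambda_2,\mu_2,\nu_2$ (this comes from the Littlewood--Richardson rule applied to $c^{\lambda+(d)}_{\alpha,\beta}$, etc., where the long first row of $\lambda+(d)$ can only be split between $\alpha_1$ and $\beta_1$); (ii) uniform stability thresholds for the finitely many Kronecker and LR coefficients that can occur once those lower bounds hold; and (iii) that the maximum of all these thresholds over the relevant index set is $\le$ the claimed bound. The ``half-speed'' heuristic about Murnaghan stabilisation is intuitive but not a substitute for (ii) and (iii), and there is no reason a priori that the threshold would come out with exactly the $-1$ and the symmetric coefficient pattern in the statement without carrying the computation through. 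As written, the proposal does not establish the proposition.
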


\section{New stability results by geometric methods}\label{stability_results_Aguiar}

\subsection{The Aguiar coefficients as branching coefficients}\label{section_aguiar_as_branching}

In order to use on the Aguiar coefficients the same methods that we used in \cite{article1}, we express these as branching coefficients for connected complex reductive groups. To do this we use a fact given in \cite[Lemma 3.3]{ying}: there is a remarkable expression of the Aguiar coefficients in terms of Littlewood-Richardson and Kronecker coefficients. We already defined Kronecker coefficients in the introduction; let us now recall a definition of Littlewood-Richardson coefficients.

\vspace{5mm}

If $V$ is a finite dimensional complex vector space, the irreducible polynomial $\gl(V)$-modules are in one to one correspondence with the partitions of length at most $\dim V$. For such a partition $\lambda$, we denote by $\Sc^\lambda V$ the corresponding irreducible representation of $\gl(V)$ ($\Sc$ is a usual notation, denoting the Schur functor). Then the Littlewood-Richardson coefficients appear in the following situation:

\begin{de}
If $\lambda$ and $\mu$ are two partitions of length at most $\dim V$, the tensor product of $\Sc^\lambda V$ and $\Sc^\mu V$ is naturally a representation of $\gl(V)$, and thus decomposes into a direct sum of irreducible ones in the following way:
\[ \Sc^\lambda V\otimes\Sc^\mu V=\bigoplus_{\nu\,|\,\ell(\nu)\leq\dim V}\Sc^\nu V^{\oplus c_{\lambda,\mu}^\nu}. \]
The multiplicities $c_{\lambda,\mu}^\nu$ are nonnegative integers called the Littlewood-Richardson coefficients.
\end{de}

Then the proposition (see \cite[Lemma 3.3]{ying}) on which the proof of Theorem \ref{main_result_ying} is strongly based is the following:
\begin{prop}[Ying]\label{prop_expr_aguiar}
For all partitions $\lambda$, $\mu$, and $\nu$,
\[ a_{\lambda,\mu}^\nu=\sum_{\alpha,\beta,\delta,\eta,\rho,\tau}c_{\alpha,\beta}^\lambda\hphantom{a}c_{\eta,\rho}^\mu\hphantom{a}g_{\beta,\eta,\delta}\hphantom{a}c_{\alpha,\delta}^\tau\hphantom{a}c_{\tau,\rho}^\nu. \]
\end{prop}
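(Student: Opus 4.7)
I would unpack the definition and compute the multiplicity by tracking the restriction and then the induction in turn, using the three tools this decomposition is designed to exhibit: the Littlewood-Richardson rule for restriction/induction between $\gs_n$ and a Young subgroup, and the Kronecker rule for the diagonal restriction. Set $k=|\lambda|$, $l=|\mu|$, $i=|\nu|$ with $\max(k,l)\leq i\leq k+l$, and abbreviate $m=k+l-i$.

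First, I would decompose the restriction step of the definition. Factoring through the intermediate group $\gs_{i-l}\times\gs_m\times\gs_m\times\gs_{i-k}$ in the upper path of the diagram, restricting $M_\lambda$ from $\gs_k$ to $\gs_{i-l}\times\gs_m$ yields
\[ \mathrm{Res}\, M_\lambda=\bigoplus_{\alpha,\beta}c_{\alpha,\beta}^{\lambda}\,M_\alpha\otimes M_\beta, \]
and similarly for $M_\mu$ restricted to $\gs_m\times\gs_{i-k}$, with coefficients $c_{\eta,\rho}^{\mu}$. The final restriction through $\id\times\Delta_m\times\id$ replaces $M_\beta\otimes M_\eta$ (as a $\gs_m\times\gs_m$-module) by its diagonal $\gs_m$-structure, which by definition of the Kronecker coefficients decomposes as $\bigoplus_\delta g_{\beta,\eta,\delta}\,M_\delta$. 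Altogether, the restriction to $\gs_{i-l}\times\gs_m\times\gs_{i-k}$ reads
\[ \bigoplus_{\alpha,\beta,\eta,\rho,\delta}c_{\alpha,\beta}^{\lambda}c_{\eta,\rho}^{\mu}g_{\beta,\eta,\delta}\;M_\alpha\otimes M_\delta\otimes M_\rho. \]

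Next, I would carry out the induction up to $\gs_i$ in two stages. Rather than following the lower path of the diagram through $\gs_{i-l}\times\gs_l$, I would observe that by induction in stages one may equally factor the embedding $\gs_{i-l}\times\gs_m\times\gs_{i-k}\hookrightarrow\gs_i$ through $\gs_k\times\gs_{i-k}$ (both factorisations realise the same embedding into $\gs_i$, since they only differ by the internal order in which disjoint blocks of indices are grouped). Inducing $M_\alpha\otimes M_\delta$ from $\gs_{i-l}\times\gs_m$ to $\gs_k$ produces $\bigoplus_\tau c_{\alpha,\delta}^{\tau}\,M_\tau$ by the Littlewood-Richardson rule, and then inducing $M_\tau\otimes M_\rho$ from $\gs_k\times\gs_{i-k}$ to $\gs_i$ yields $\bigoplus_\nu c_{\tau,\rho}^{\nu}\,M_\nu$.

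Assembling the two computations and reading off the coefficient of $M_\nu$ gives precisely the claimed sum. The only real care needed is bookkeeping: checking that the partition sizes add up correctly at every step ($|\alpha|+|\beta|=k$, $|\eta|+|\rho|=l$, $|\beta|=|\eta|=|\delta|=m$, $|\tau|=k$, $|\nu|=i$), and justifying that induction in stages is legitimate for the reshuffled chain. There is no genuine obstacle beyond this: once the restriction has been split into its three independent pieces (Littlewood-Richardson on $\lambda$, Littlewood-Richardson on $\mu$, Kronecker on the shared diagonal factor), the induction is a straightforward two-step Littlewood-Richardson computation, and the resulting formula matches the statement.
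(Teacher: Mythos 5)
The paper does not prove this statement itself; it cites it directly from Ying's article as Lemma~3.3. So there is no in-paper argument to compare against, and the relevant question is simply whether your derivation is correct. It is.

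Your computation correctly unpacks the definition of $(M_\lambda\sharp M_\mu)_i$. The restriction step is exactly as you describe: Littlewood--Richardson for $\mathrm{Res}^{\gs_k}_{\gs_{i-l}\times\gs_m}M_\lambda$ and $\mathrm{Res}^{\gs_l}_{\gs_{m}\times\gs_{i-k}}M_\mu$ (giving the factors $c_{\alpha,\beta}^\lambda$ and $c_{\eta,\rho}^\mu$), then the diagonal restriction $\gs_m\hookrightarrow\gs_m\times\gs_m$ on the two middle tensorands (giving $g_{\beta,\eta,\delta}$). The point you are right to flag as needing care is the reshuffling of the induction chain. The definition's diagram goes $\gs_{i-l}\times\gs_m\times\gs_{i-k}\hookrightarrow\gs_{i-l}\times\gs_l\hookrightarrow\gs_i$, which by induction in stages would produce $\sum c_{\delta,\rho}^\sigma c_{\alpha,\sigma}^\nu$ in place of the last two factors; you instead route through $\gs_k\times\gs_{i-k}$, which yields the stated $c_{\alpha,\delta}^\tau c_{\tau,\rho}^\nu$. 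This is legitimate because both composites $\iota_{i-l,l}\circ(\mathrm{id}\times\iota_{m,i-k})$ and $\iota_{k,i-k}\circ(\iota_{i-l,m}\times\mathrm{id})$ send $(\sigma_1,\sigma_2,\sigma_3)$ to literally the same permutation of $\{1,\dots,i\}$ (identity on the blocks $\{1,\dots,i-l\}$, $\{i-l+1,\dots,k\}$, $\{k+1,\dots,i\}$ up to shifts), so induction in stages along either chain computes the same module, and the two resulting sums are equal. Your choice is the one that matches the printed formula. One minor clarification worth making explicit is that the first induction step also carries the third tensor factor along untouched: $\mathrm{Ind}^{\gs_k\times\gs_{i-k}}_{(\gs_{i-l}\times\gs_m)\times\gs_{i-k}}(M_\alpha\otimes M_\delta\otimes M_\rho)\cong\bigl(\mathrm{Ind}^{\gs_k}_{\gs_{i-l}\times\gs_m}(M_\alpha\otimes M_\delta)\bigr)\otimes M_\rho$, which is the standard compatibility of induction with external tensor products over a product of groups. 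With that noted, the argument is complete and correct.
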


We are going to use the fact that the Kronecker and Littlewood-Richardson coefficients appear also in some classical results called ``branching rules'':

\begin{lemma}[Branching rules]\label{lemma_branching_rules}
Let $V_1$ and $V_2$ be two finite dimensional $\C$-vector spaces. We have a morphism
\[ \begin{array}{ccl}
\gl(V_1)\times\gl(V_2) & \longrightarrow & \gl(V_1\otimes V_2)\\
(g_1,g_2) & \longmapsto & \big(\sum_i v^{(i)}_1\otimes v^{(i)}_2\mapsto\sum_i g_1(v^{(i)}_1)\otimes g_2(v^{(i)}_2)\big)
\end{array}. \]
Then, for any partition $\nu$ of length at most $\dim V_1\dim V_2$, $\Sc^\nu(V_1\otimes V_2)$ can be seen as a $\gl(V_1)\times\gl(V_2)$-module via this morphism, and as such it decomposes in the following way:
\[ \Sc^\nu(V_1\otimes V_2)=\bigoplus_{\lambda,\mu}\left(\Sc^\lambda V_1\otimes\Sc^\mu V_2\right)^{\oplus g_{\lambda,\mu,\nu}} \]
(The multiplicities are indeed the Kronecker coefficients.)\\
Likewise there is a morphism
\[ \begin{array}{ccl}
\gl(V_1)\times\gl(V_2) & \longrightarrow & \gl(V_1\oplus V_2)\\
(g_1,g_2) & \longmapsto & \big(v_1+v_2\mapsto g_1(v_1)+g_2(v_2)\big)
\end{array} \]
and, for any partition $\nu$ of length at most $\dim V_1+\dim V_2$,
\[ \Sc^\nu(V_1\oplus V_2)=\bigoplus_{\lambda,\mu}\left(\Sc^\lambda V_1\otimes\Sc^\mu V_2\right)^{\oplus c_{\lambda,\mu}^\nu} \]
as representations of $\gl(V_1)\times\gl(V_2)$ once again. (The multiplicities are this time the Littlewood-Richardson coefficients.)
\end{lemma}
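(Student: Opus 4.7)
The plan is to derive both identities uniformly from Schur--Weyl duality. Recall that for any finite dimensional $\C$-vector space $V$ and any integer $d\geq 0$, one has a decomposition
\[ V^{\otimes d}=\bigoplus_{\nu\vdash d,\,\ell(\nu)\leq\dim V}\Sc^\nu V\otimes M_\nu \]
as a $\gl(V)\times\gs_d$-module, so that $\Sc^\nu V=\Hom_{\gs_d}(M_\nu,V^{\otimes d})$ as a $\gl(V)$-module. I would use this to reduce each branching rule to a purely symmetric-group statement.

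For the tensor product branching rule, I would set $d=|\nu|$ and observe the natural isomorphism $(V_1\otimes V_2)^{\otimes d}\simeq V_1^{\otimes d}\otimes V_2^{\otimes d}$ of $\gl(V_1)\times\gl(V_2)\times\gs_d$-modules, where $\gs_d$ acts diagonally on the right hand side. Applying Schur--Weyl duality to each factor gives
\[ V_1^{\otimes d}\otimes V_2^{\otimes d}=\bigoplus_{\lambda,\mu}\Sc^\lambda V_1\otimes\Sc^\mu V_2\otimes(M_\lambda\otimes M_\mu). \]
Taking $\Hom_{\gs_d}(M_\nu,\cdot)$ on both sides and recalling that, by definition of the Kronecker coefficients, $\dim\Hom_{\gs_d}(M_\nu,M_\lambda\otimes M_\mu)=g_{\lambda,\mu,\nu}$, one obtains the first formula.

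For the direct sum branching rule, I would instead expand $(V_1\oplus V_2)^{\otimes d}$ by separating, in each pure tensor, the positions lying in $V_1$ from those lying in $V_2$. This yields the $\gl(V_1)\times\gl(V_2)\times\gs_d$-equivariant decomposition
\[ (V_1\oplus V_2)^{\otimes d}=\bigoplus_{a+b=d}\mathrm{Ind}_{\gs_a\times\gs_b}^{\gs_d}\bigl(V_1^{\otimes a}\otimes V_2^{\otimes b}\bigr). \]
Applying Schur--Weyl to each $V_i^{\otimes a_i}$, then taking $\Hom_{\gs_d}(M_\nu,\cdot)$ and using Frobenius reciprocity, the multiplicity of $\Sc^\lambda V_1\otimes\Sc^\mu V_2$ in $\Sc^\nu(V_1\oplus V_2)$ becomes $\dim\Hom_{\gs_{a}\times\gs_{b}}\bigl(M_\lambda\otimes M_\mu,\mathrm{Res}_{\gs_a\times\gs_b}^{\gs_d}M_\nu\bigr)$, which is precisely $c_{\lambda,\mu}^\nu$ by the classical description of Littlewood--Richardson coefficients as the multiplicities appearing in the induction product $M_\lambda\cdot M_\mu=\mathrm{Ind}_{\gs_a\times\gs_b}^{\gs_{a+b}}(M_\lambda\otimes M_\mu)$.

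The core computations are standard; the main care points are bookkeeping the various $\gs_d$-actions (diagonal action in the tensor case versus permutation of tensor positions in the direct sum case), and checking that the dimension hypotheses $\ell(\nu)\leq\dim V_1\dim V_2$ and $\ell(\nu)\leq\dim V_1+\dim V_2$ ensure that no irreducible summand on either side is artificially killed by the Schur--Weyl decomposition. No genuine obstacle is expected beyond this bookkeeping.
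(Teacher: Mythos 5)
Your argument is correct and follows the same route the paper takes: the paper's proof is a one-line citation to Sam--Snowden (3.11)--(3.12), which derive both branching rules via Schur--Weyl duality exactly as you do (diagonal $\gs_d$-action on $V_1^{\otimes d}\otimes V_2^{\otimes d}$ for the Kronecker case, and the decomposition of $(V_1\oplus V_2)^{\otimes d}$ into inductions $\mathrm{Ind}_{\gs_a\times\gs_b}^{\gs_d}$ with Frobenius reciprocity for the Littlewood--Richardson case). Your sketch simply fills in the details that the paper outsources, and it is sound.
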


\begin{proof}
Some proofs of these classical facts can be found in \cite[Part 1]{sam-snowden2} (using Schur-Weyl duality): (3.11) and (3.12).
\end{proof}

Using Proposition \ref{prop_expr_aguiar} and Lemma \ref{lemma_branching_rules}, we can then see the Aguiar coefficients as branching coefficients for connected complex reductive groups:

\begin{theo}\label{theorem_aguiar_branching}
Let $V_1$ and $V_2$ be two finite dimensional $\C$-vector spaces, and denote $G=\gl(V_1)\times\gl(V_2)$ and $\hat{G}=\gl\big(V_1\oplus(V_1\otimes V_2)\oplus V_2\big)$. We consider the morphism $\varphi:G\rightarrow\hat{G}$, defined by $(g_1,g_2)\mapsto\varphi_{g_1,g_2}$, where
\[ \begin{array}{rccl}
\varphi_{g_1,g_2}: & V_1\oplus(V_1\otimes V_2)\oplus V_2 & \longrightarrow & V_1\oplus(V_1\otimes V_2)\oplus V_2\\
 & u_1+(\sum_i v^{(i)}_1\otimes v^{(i)}_2)+u_2 & \longmapsto & g_1(u_1)+(\sum_i g_1(v^{(i)}_1)\otimes g_2(v^{(i)}_2))+g_2(u_2)
\end{array}. \]
Then the Aguiar coefficients are the branching coefficients for this situation. In other words, if $\nu$ is a partition such that $\ell(\nu)\leq\dim\big(V_1\oplus (V_1\otimes V_2)\oplus V_2\big)$, then
\[ \Sc^\nu\big(V_1\oplus(V_1\otimes V_2)\oplus V_2\big)=\bigoplus_{\lambda,\mu}\left(\Sc^\lambda V_1\otimes\Sc^\mu V_2\right)^{\oplus a_{\lambda,\mu}^\nu} \]
(as representations of $\gl(V_1)\times\gl(V_2)$).
\end{theo}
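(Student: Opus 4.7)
The strategy is simply to apply the two branching rules of Lemma \ref{lemma_branching_rules} in succession, following the way the target representation $U := V_1\oplus(V_1\otimes V_2)\oplus V_2$ is built, and then to match the resulting combinatorial formula with the expression of $a_{\lambda,\mu}^\nu$ given by Proposition \ref{prop_expr_aguiar}. The morphism $\varphi$ factors through
\[ \gl(V_1)\times\gl(V_2)\longrightarrow\gl(V_1)\times\gl(V_1\otimes V_2)\times\gl(V_2)\longrightarrow\gl(U), \]
where the right-hand arrow is the direct-sum inclusion and the left-hand arrow uses the tensor-product morphism $\gl(V_1)\times\gl(V_2)\to\gl(V_1\otimes V_2)$ on the middle factor (and the diagonal on the outer $\gl(V_1)$ and $\gl(V_2)$ factors). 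So I would unfold the branching along this factorisation.

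First, I would apply the direct-sum branching rule (the second half of Lemma \ref{lemma_branching_rules}) twice — or once with a three-way version obtained by iterating — to decompose $\Sc^\nu U$ as a $\gl(V_1)\times\gl(V_1\otimes V_2)\times\gl(V_2)$-module. Introducing intermediate partitions $\tau$ (for $V_1\oplus(V_1\otimes V_2)$) and then $\alpha$ (for $V_1$) and $\delta$ (for $V_1\otimes V_2$), this produces
\[ \Sc^\nu U=\bigoplus_{\tau,\rho,\alpha,\delta}c_{\tau,\rho}^{\nu}\,c_{\alpha,\delta}^{\tau}\;\Sc^\alpha V_1\otimes\Sc^\delta(V_1\otimes V_2)\otimes\Sc^\rho V_2. \]

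Next, I would apply the tensor-product branching rule (the first half of Lemma \ref{lemma_branching_rules}) on the middle factor $\Sc^\delta(V_1\otimes V_2)$, introducing partitions $\beta$ and $\eta$ with Kronecker multiplicity $g_{\beta,\eta,\delta}$. At this stage the module is decomposed as a representation of $\gl(V_1)\times\gl(V_1)\times\gl(V_2)\times\gl(V_2)$, with external factors $\Sc^\alpha V_1,\,\Sc^\beta V_1,\,\Sc^\eta V_2,\,\Sc^\rho V_2$. Finally I would restrict along the diagonal $\gl(V_1)\times\gl(V_2)\hookrightarrow\gl(V_1)^2\times\gl(V_2)^2$: this amounts to taking tensor products $\Sc^\alpha V_1\otimes\Sc^\beta V_1$ and $\Sc^\eta V_2\otimes\Sc^\rho V_2$, which by definition decompose with Littlewood–Richardson coefficients $c_{\alpha,\beta}^{\lambda}$ and $c_{\eta,\rho}^{\mu}$ respectively.

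Collecting everything yields
\[ \Sc^\nu U=\bigoplus_{\lambda,\mu}\Big(\Sc^\lambda V_1\otimes\Sc^\mu V_2\Big)^{\oplus N_{\lambda,\mu}^{\nu}},\quad N_{\lambda,\mu}^{\nu}=\sum_{\alpha,\beta,\delta,\eta,\rho,\tau}c_{\alpha,\beta}^{\lambda}\,c_{\eta,\rho}^{\mu}\,g_{\beta,\eta,\delta}\,c_{\alpha,\delta}^{\tau}\,c_{\tau,\rho}^{\nu}, \]
which is exactly the formula of Proposition \ref{prop_expr_aguiar}; hence $N_{\lambda,\mu}^\nu=a_{\lambda,\mu}^\nu$. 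There is essentially no hard step: the only things to check are that the hypothesis ``of large enough dimensions'' ensures all length conditions needed to apply the two branching rules at each stage (e.g. $\ell(\tau)\leq\dim V_1+\dim(V_1\otimes V_2)$, $\ell(\delta)\leq\dim V_1\dim V_2$, etc.), and that the combinatorial formula obtained matches Ying's expression up to the symmetry $c_{\alpha,\beta}^{\lambda}=c_{\beta,\alpha}^{\lambda}$ of Littlewood–Richardson coefficients (needed since the direct-sum branching produces $\Sc^\alpha V_1\otimes\Sc^\beta V_1$ in whichever order depending on how the three-way direct sum is split). The bookkeeping of indices is the only real task, and no new idea is required beyond Lemma \ref{lemma_branching_rules} and Proposition \ref{prop_expr_aguiar}.
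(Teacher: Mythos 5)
Your proof plan is essentially identical to the paper's argument: iterate the direct-sum branching rule to peel off $V_1$ and $V_2$, apply the tensor-product branching rule on the $\Sc^\delta(V_1\otimes V_2)$ factor, recombine the two $\gl(V_1)$-factors and the two $\gl(V_2)$-factors via Littlewood--Richardson, and match the resulting sum with Proposition~\ref{prop_expr_aguiar}. The only cosmetic addition on your side is the explicit mention of the factorisation of $\varphi$ and of the LR symmetry, neither of which changes the substance.
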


\begin{proof}
Let $\nu$ be as above. Then, using the definition of the Littlewood-Richardson coefficients as well as Lemma \ref{lemma_branching_rules}:
\[ \begin{array}{rcl}
\Sc^\nu\big(V_1\oplus(V_1\otimes V_2)\oplus V_2\big) & = & \dps\bigoplus_{\tau,\rho}c_{\tau,\rho}^\nu\hphantom{a}\Sc^\tau\big(V_1\oplus(V_1\otimes V_2)\big)\otimes\Sc^\rho V_2\\
 & = & \dps\bigoplus_{\tau,\rho,\alpha,\delta}c_{\tau,\rho}^\nu\hphantom{a}c_{\alpha,\delta}^\tau\hphantom{a}\Sc^\alpha V_1\otimes\Sc^\delta(V_1\otimes V_2)\otimes\Sc^\rho V_2\\
 & = & \dps\bigoplus_{\tau,\rho,\alpha,\delta,\beta,\eta}c_{\tau,\rho}^\nu\hphantom{a}c_{\alpha,\delta}^\tau\hphantom{a}g_{\beta,\eta,\delta}\hphantom{a}\Sc^\alpha V_1\otimes\Sc^\beta V_1\otimes\Sc^\eta V_2\otimes\Sc^\rho V_2\\
 & = & \dps\bigoplus_{\tau,\rho,\alpha,\delta,\beta,\eta,\lambda}c_{\tau,\rho}^\nu\hphantom{a}c_{\alpha,\delta}^\tau\hphantom{a}g_{\beta,\eta,\delta}\hphantom{a}c_{\alpha,\beta}^\lambda\hphantom{a}\Sc^\lambda V_1\otimes\Sc^\eta V_2\otimes\Sc^\rho V_2\\
 & = & \dps\bigoplus_{\tau,\rho,\alpha,\delta,\beta,\eta,\lambda,\mu}c_{\tau,\rho}^\nu\hphantom{a}c_{\alpha,\delta}^\tau\hphantom{a}g_{\beta,\eta,\delta}\hphantom{a}c_{\alpha,\beta}^\lambda\hphantom{a}c_{\eta,\rho}^\mu\hphantom{a}\Sc^\lambda V_1\otimes\Sc^\mu V_2\\
 & = & \dps\bigoplus_{\lambda,\mu}a_{\lambda,\mu}^\nu\hphantom{a}\Sc^\lambda V_1\otimes\Sc^\mu V_2,
\end{array} \]
using Proposition \ref{prop_expr_aguiar}.
\end{proof}

We can use this interpretation of the Aguiar coefficients to have a geometric point of view on them. For this reason we have to recall a result known as the Borel-Weil Theorem. Consider $V$ a $\C$-vector space of dimension $n\in\Z_{>0}$. If $B$ is a Borel subgroup of $\gl(V)$, then the complete flag variety $\fl(V)=\{E_1\subset E_2\subset\dots\subset E_{n-1}\,|\,\forall i, \; \dim(E_i)=i\}$ of $V$ is naturally isomorphic to $\gl(V)/B$. Moreover, any $n$-tuple $\alpha=(\alpha_1,\dots,\alpha_n)$ of integers defines uniquely a character of $B$, and we denote by $\C_\alpha$ the associated one-dimensional complex representation of $B$. As a consequence, any partition $\lambda$ of length at most $n$ allows to define the following fibre product
\[ \Lb_\lambda=\gl(V)\times_B\C_{-\lambda}, \]
which is a line bundle on $\gl(V)/B\simeq\fl(V)$ on which $\gl(V)$ acts (by left multiplication). Then the Borel-Weil Theorem states that the space of sections $\h^0(X,\Lb_\lambda)$ is a $\gl(V)$-module isomorphic to the dual of the irreducible representation $\Sc^\lambda V$.

\begin{cor}\label{cor_aguiar_sections_line_bundle}
Let $\lambda$, $\mu$, $\nu$ be three partitions. Taking $V_1$ and $V_2$ as in the previous theorem, we set:
\[ G=\gl(V_1)\times\gl(V_2), \]
\[ X=\fl(V_1)\times\fl(V_2)\times\fl\big(V_1\oplus(V_1\otimes V_2)\oplus V_2\big), \]
and
\[ \Lb=\Lb_\lambda\otimes\Lb_\mu\otimes\Lb_\nu^* \]
($G$-linearised line bundle on $X$). Then
\[ a_{\lambda,\mu}^\nu=\dim\h^0(X,\Lb)^G. \]
\end{cor}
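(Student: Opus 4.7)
The plan is to compute $\h^0(X,\Lb)$ explicitly via the Künneth formula, identify each tensor factor by the Borel–Weil theorem, and then invoke Theorem \ref{theorem_aguiar_branching} to recognise the resulting dimension as $a_{\lambda,\mu}^\nu$.

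First, writing $W=V_1\oplus(V_1\otimes V_2)\oplus V_2$ for short and noting that $\Lb$ is the external tensor product of the three chosen line bundles on the three factors of $X$, I would apply the Künneth formula to obtain the $G$-equivariant isomorphism
\[ \h^0(X,\Lb)\simeq \h^0(\fl(V_1),\Lb_\lambda)\otimes \h^0(\fl(V_2),\Lb_\mu)\otimes \h^0(\fl(W),\Lb_\nu^*), \]
where $G$ acts on the third factor through the morphism $\varphi$ of Theorem \ref{theorem_aguiar_branching}. By Borel–Weil, the first two factors are $(\Sc^\lambda V_1)^*$ and $(\Sc^\mu V_2)^*$ respectively, while the third factor identifies (with the appropriate sign convention attached to the dual line bundle) with $\Sc^\nu W$ itself, viewed as a $G$-module via $\varphi$. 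Using the standard identity $\Hom_G(A,B)\simeq (A^*\otimes B)^G$ for finite-dimensional $G$-modules, taking $G$-invariants then yields
\[ \h^0(X,\Lb)^G\simeq \Hom_G\bigl(\Sc^\lambda V_1\otimes\Sc^\mu V_2,\ \Sc^\nu W\bigr). \]
By Theorem \ref{theorem_aguiar_branching}, the dimension of this Hom-space is exactly the multiplicity of $\Sc^\lambda V_1\otimes\Sc^\mu V_2$ in the decomposition of $\Sc^\nu W$ as a $G$-module, namely $a_{\lambda,\mu}^\nu$.

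The only step that demands genuine care is the identification of $\h^0(\fl(W),\Lb_\nu^*)$ with $\Sc^\nu W$ (rather than with its dual): this amounts to pinning down the right sign convention on the dual line bundle side of the Borel–Weil theorem, so that it agrees with the already-fixed convention $\h^0(\fl(V),\Lb_\eta)\simeq(\Sc^\eta V)^*$. Once that convention check is done, the rest is a direct concatenation of Künneth, Borel–Weil, and the branching interpretation already established in Theorem \ref{theorem_aguiar_branching}.
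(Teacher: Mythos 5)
Your proposal is correct and follows essentially the same route as the paper, just unwound in the opposite order: the paper starts from Theorem \ref{theorem_aguiar_branching} plus Schur's Lemma to write $a_{\lambda,\mu}^\nu=\dim\big((\Sc^\lambda V_1)^*\otimes(\Sc^\mu V_2)^*\otimes\Sc^\nu(V_1\oplus(V_1\otimes V_2)\oplus V_2)\big)^G$ and then invokes Borel--Weil three times, while you start from $\h^0(X,\Lb)$, apply K\"unneth and Borel--Weil to land on the same invariant space, and then invoke the theorem. You are also right that the one point needing genuine attention is the identification of $\h^0\big(\fl(W),\Lb_\nu^*\big)$ with $\Sc^\nu W$ (it requires interpreting $\Lb_\nu^*$ via the duality $\fl(W)\simeq\fl(W^*)$, i.e.\ as the line bundle attached to $-w_0\nu$, rather than as a literally antidominant twist with no sections); the paper glosses over exactly the same point with the phrase ``using three times the Borel--Weil Theorem,'' so your flag of this subtlety is, if anything, a small improvement in rigor.
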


\begin{proof}
Using Theorem \ref{theorem_aguiar_branching},
\[ \Sc^\nu\big(V_1\oplus(V_1\otimes V_2)\oplus V_2\big)=\bigoplus_{\lambda,\mu}\left(\Sc^\lambda V_1\otimes\Sc^\mu V_2\right)^{\oplus a_{\lambda,\mu}^\nu} \]
as representations of $G=\gl(V_1)\times\gl(V_2)$. As a consequence, Schur's Lemma implies that
\[ a_{\lambda,\mu}^\nu=\dim\Big((\Sc^\lambda V_1)^*\otimes(\Sc^\mu V_2)^*\otimes\Sc^\nu\big(V_1\oplus(V_1\otimes V_2)\oplus V_2\big)\Big)^G, \]
and we immediately get the conclusion by using three times the Borel-Weil Theorem.
\end{proof}

\subsection{Consequences and new examples of stable triples}\label{section_new_stability_results}

Since the Aguiar coefficients can be expressed as $\dim\h^0(X,\Lb)^G$, for well-chosen $G$, $X$, and $\Lb$ (cf Corollary \ref{cor_aguiar_sections_line_bundle}), the same techniques used in \cite{article1} for Kronecker coefficients apply. The main notion we use comes from Geometric Invariant Theory and is the one of ``semi-stable points'': if $X$ is a projective algebraic variety, on which acts a connected complex reductive group $G$, and if $\Lb$ is a $G$-linearised line bundle on $X$, then the set of semi-stable points in $X$ relatively to $\Lb$ is
\[ X^{ss}(\Lb)=\{x\in X\,|\,\exists d\in\Z_{>0}, \; \exists\sigma\in\h^0(X,\Lb^{\otimes d})^G, \; \sigma(x)\neq 0\}. \]
The set of unstable points relatively to $\Lb$ is its complementary: $X^{us}(\Lb)=X\setminus X^{ss}(\Lb)$. This geometric point of view allows to obtain the following:

\begin{theo}\label{thm_stab_aguiar}
Let $\alpha$, $\beta$, and $\gamma$ be three partitions such that, for all $d\in\Z_{>0}$, $a_{d\alpha,d\beta}^{d\gamma}=1$. Then, for all triple $(\lambda,\mu;\nu)$ of partitions, the sequence $(a_{\lambda+d\alpha,\mu+d\beta}^{\nu+d\gamma})_{d\in\Z_{\geq 0}}$ is constant for $d\gg 0$.
\end{theo}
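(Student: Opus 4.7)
The plan is to translate the statement into a geometric problem via Corollary~\ref{cor_aguiar_sections_line_bundle} and then apply a standard argument from Geometric Invariant Theory. Fix finite-dimensional complex vector spaces $V_1$ and $V_2$ of sufficiently large dimension, set $G=\gl(V_1)\times\gl(V_2)$, and let
\[ X=\fl(V_1)\times\fl(V_2)\times\fl\big(V_1\oplus(V_1\otimes V_2)\oplus V_2\big). \]
Introduce the two $G$-linearised line bundles $\Lb_0=\Lb_\alpha\otimes\Lb_\beta\otimes\Lb_\gamma^*$ and $\Lb_1=\Lb_\lambda\otimes\Lb_\mu\otimes\Lb_\nu^*$ on $X$. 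The corollary gives, for every $d\in\Z_{\geq 0}$,
\[ a_{d\alpha,d\beta}^{d\gamma}=\dim\h^0(X,\Lb_0^{\otimes d})^G\quad\text{and}\quad a_{\lambda+d\alpha,\mu+d\beta}^{\nu+d\gamma}=\dim\h^0(X,\Lb_1\otimes\Lb_0^{\otimes d})^G, \]
so the task reduces to showing that the right-hand side of the second equality is eventually constant in $d$.

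Next, consider the graded $\C$-algebra
\[ R=\bigoplus_{d\geq 0}\h^0(X,\Lb_0^{\otimes d})^G \]
and the graded $R$-module
\[ A=\bigoplus_{d\geq 0}\h^0(X,\Lb_1\otimes\Lb_0^{\otimes d})^G. \]
Because $X$ is projective and $G$ is reductive, standard results from GIT (applied to the total invariant section ring of a pair of $G$-linearised line bundles) ensure that $R$ is a finitely generated graded $\C$-algebra and that $A$ is a finitely generated graded $R$-module. Moreover, $R$ is a graded integral domain since $X$ is irreducible and $R$ sits inside the (integral) full section ring of $\Lb_0$.

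The hypothesis $a_{d\alpha,d\beta}^{d\gamma}=1$ for all $d\in\Z_{>0}$ translates exactly into $\dim R_d=1$ in every degree $d\geq 0$. Choosing any non-zero $s\in R_1$, integrality of $R$ gives $s^d\neq 0$ in $R_d$, hence $R_d=\C\cdot s^d$ and $R=\C[s]$, the polynomial ring in one variable of degree one. The structure theorem for finitely generated graded modules over the principal ideal domain $\C[s]$ then yields an isomorphism of graded modules
\[ A\cong\Big(\bigoplus_{j=1}^r R(-e_j)\Big)\oplus T, \]
where $r=\rk_R(A)$, the $e_j$ are non-negative integers, and $T$ is a finite-dimensional torsion module. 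Reading this in degree $d$, one gets $\dim A_d=\#\{j\,:\,e_j\leq d\}+\dim T_d$, which stabilises to $r$ as soon as $d$ exceeds both $\max_j e_j$ and the top degree of $T$. The step I expect to be the main obstacle is the joint finite generation of $R$ and of $A$ over $R$, which genuinely rests on the GIT machinery (notably the non-emptiness of $X^{ss}(\Lb_0)$, which is itself a consequence of the hypothesis); once this is granted, the rest of the argument is purely formal algebra.
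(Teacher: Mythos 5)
Your proof is correct and takes a genuinely different route from the paper's. The paper restricts invariant sections to the semistable locus via a Guillemin--Sternberg isomorphism, uses the eventual inclusion $X^{ss}(\Mb\otimes\Lb^{\otimes d})\subset X^{ss}(\Lb)$, and identifies $X^{ss}(\Lb)$ with a homogeneous bundle $G\times_H S$ by a corollary of Luna's étale slice theorem, so that sections stabilise because $\Lb$ trivialises on the slice. You instead argue algebraically on the invariant section ring $R=\bigoplus_d\h^0(X,\Lb_0^{\otimes d})^G$ and the invariant section module $A=\bigoplus_d\h^0(X,\Lb_1\otimes\Lb_0^{\otimes d})^G$: the hypothesis pins $R$ down to $\C[s]$ (using that $R$ is a domain since $X$ is irreducible), and the structure theorem for finitely generated graded modules over a graded PID does the rest. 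Your route is cleaner and more transparent once finite generation of $A$ over $R$ is secured, but it is purely existential: the paper's geometric chain (unstable loci, destabilising one-parameter subgroups, Hilbert--Mumford) is exactly what yields the explicit bounds of stabilisation computed in Section~\ref{section_bounds_stabilisation}, and those are out of reach of your argument. Two small points to tighten: the finite generation of $A$ as an $R$-module when $\Lb_0$ is merely semi-ample rather than ample is true but deserves at least a sketch --- push forward along the morphism $X\to\proj\bigl(\bigoplus_d\h^0(X,\Lb_0^{\otimes d})\bigr)$ furnished by semi-ampleness, apply Serre's theorems to the coherent pushforwards there, then take reductive invariants (Nagata for the algebra, the square-zero extension trick for the module) --- and this finite generation does not in fact hinge on $X^{ss}(\Lb_0)\neq\emptyset$ as you suggest (if the semistable locus were empty one would simply have $R=\C$, a case excluded by your hypothesis but not needed to run the finite-generation machinery).
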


\begin{proof}
We give a sketch of the proof. For every details see \cite{article1}. With Corollary \ref{cor_aguiar_sections_line_bundle}, we can write, with $V_1$ and $V_2$ vector spaces of large enough dimension, $X=\fl(V_1)\times\fl(V_2)\times\fl\big(V_1\oplus(V_1\otimes V_2)\oplus V_2\big)$, $G=\gl(V_1)\times\gl(V_2)$, $\Lb=\Lb_\alpha\otimes\Lb_\beta\otimes\Lb_\gamma^*$, $\Mb=\Lb_\lambda\otimes\Lb_\mu\otimes\Lb_\nu^*$:
\[ \forall d\in\Z_{\geq 0}, \; a_{\lambda+d\alpha,\mu+d\beta}^{\nu+d\gamma}=\dim\h^0\left(X,\Mb\otimes\Lb^{\otimes d}\right)^G. \]
Then a result by Victor Guillemin and Shlomo Sternberg (see \cite[Proposition 2.4]{article1}) gives
\[ \h^0\left(X,\Mb\otimes\Lb^{\otimes d}\right)^G\simeq\h^0\left(X^{ss}(\Mb\otimes\Lb^{\otimes d}),\Mb\otimes\Lb^{\otimes d}\right)^G. \]
Moreover there exists $d_0\in\Z_{\geq 0}$ such that $X^{ss}(\Mb\otimes\Lb^{\otimes d})\subset X^{ss}(\Lb)$ as soon as $d\geq d_0$ (see \cite[Proposition 2.7]{article1}). Hence, when $d\geq d_0$,
\[ \h^0\left(X,\Mb\otimes\Lb^{\otimes d}\right)^G\simeq\h^0\left(X^{ss}(\Lb),\Mb\otimes\Lb^{\otimes d}\right)^G. \]
Now, since $\h^0(X,\Lb^{\otimes d})^G\simeq\C$ for any $d\in\Z_{>0}$, we can use a corollary of Luna's étale slice Theorem (see \cite[Section 2.3]{article1}): $X^{ss}(\Lb)\simeq G\times_H S$ with $H$ a reductive subgroup of $G$ and $S$ a finite dimensional $\C$-vector space on which $H$ acts linearly. As a consequence, if $d\geq d_0$,
\[ \begin{array}{rcl}
\h^0\left(X,\Mb\otimes\Lb^{\otimes d}\right)^G & \simeq & \h^0\left(G\times_H S,\Mb\otimes\Lb^{\otimes d}\right)^G\\
 & \simeq & \h^0\left(S,\Mb\otimes\Lb^{\otimes d}\right)^H\\
 & \simeq & \h^0(S,\Mb)^H
\end{array} \]
since $\Lb$ is trivial as a $H$-linearised line bundle on $S$ (see \cite[Proposition 2.8]{article1}).
\end{proof}

\begin{de}
A triple $(\alpha,\beta;\gamma)$ of partitions such that $a_{\alpha,\beta}^\gamma\neq 0$ and that, for all triple $(\lambda,\mu;\nu)$ of partitions, $(a_{\lambda+d\alpha,\mu+d\beta}^{\nu+d\gamma})_{d\in\Z_{\geq 0}}$ is constant for $d\gg 0$ is said to be Aguiar-stable.
\end{de}

With Theorem \ref{thm_stab_aguiar} we re-obtain immediately Li Ying's result on the stabilisation of the Aguiar coefficients (minus the bound of stabilisation), which can be reformulated as follows:
\begin{cor}
The triple $\big((1),(1);(1)\big)$ is Aguiar-stable.
\end{cor}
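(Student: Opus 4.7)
The plan is to verify that the triple $\big((1),(1);(1)\big)$ satisfies the hypothesis of Theorem \ref{thm_stab_aguiar}, namely that $a_{(d),(d)}^{(d)}=1$ for every $d\in\Z_{>0}$, and then to directly invoke that theorem.

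First I would check the hypothesis by reducing to a Kronecker coefficient. Since the three partitions $(d),(d),(d)$ all have the same size $d$, Remark \ref{remark_aguiar=kron} yields
\[ a_{(d),(d)}^{(d)}=g_{(d),(d),(d)}. \]
The irreducible $\gs_d$-module $M_{(d)}$ associated with the one-row partition $(d)$ is the trivial representation, hence $M_{(d)}\otimes M_{(d)}\simeq M_{(d)}$ as $\gs_d$-modules, which gives $g_{(d),(d),(d)}=1$. This supplies the required condition, and as a by-product also $a_{(1),(1)}^{(1)}=1\neq 0$ so the non-vanishing part of the Aguiar-stability definition is met.

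Then I would apply Theorem \ref{thm_stab_aguiar} with $(\alpha,\beta,\gamma)=\big((1),(1),(1)\big)$: for any triple $(\lambda,\mu;\nu)$ of partitions, the sequence $\big(a_{\lambda+(d),\mu+(d)}^{\nu+(d)}\big)_{d\in\Z_{\geq 0}}$ is eventually constant, which is precisely the definition of Aguiar-stability of $\big((1),(1);(1)\big)$.

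There is really no obstacle here; the statement is essentially a one-line corollary of Theorem \ref{thm_stab_aguiar} once one recognises that Aguiar coefficients between equal-size one-row partitions coincide with the (trivially computed) Kronecker coefficient $g_{(d),(d),(d)}=1$. The only point to be slightly careful about is not to forget invoking Remark \ref{remark_aguiar=kron} to justify the reduction from $a$ to $g$.
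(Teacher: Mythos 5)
Your argument is the same as the paper's: reduce $a_{(d),(d)}^{(d)}$ to the Kronecker coefficient $g_{(d),(d),(d)}$ via Remark~\ref{remark_aguiar=kron}, note it equals $1$, and invoke Theorem~\ref{thm_stab_aguiar}. You merely spell out why $g_{(d),(d),(d)}=1$ (the trivial representation), which the paper leaves implicit; the proof is correct.
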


\begin{proof}
For all $d\in\Z_{>0}$, according to Remark \ref{remark_aguiar=kron}, $a_{(d),(d)}^{(d)}=g_{(d),(d),(d)}=1$.
\end{proof}

\begin{rmk}
On a more general note, the same reasoning shows that every stable triple (i.e. same as Aguiar-stable but in the sense of Kronecker coefficients) is Aguiar-stable. For results producing stable triples, see \cite{stembridge}, \cite{manivel}, \cite{vallejo}, and \cite{article2}.
\end{rmk}

We can also give some new explicit examples of ``small'' Aguiar-stable triples:

\begin{prop}
The triples
\[ \big((2),(1);(2)\big),\hphantom{a}\big((2),(1);(1,1)\big),\hphantom{a}\big((2),(1);(3)\big),\text{ and }\big((2),(1);(2,1)\big) \]
are all Aguiar-stable triples.
\end{prop}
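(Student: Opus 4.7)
The plan is to reduce each of the four cases to the hypothesis of Theorem~\ref{thm_stab_aguiar}: we must verify that $a_{(2d),(d)}^{d\gamma}=1$ for every $d\in\Z_{>0}$ and for each of the four listed choices of $\gamma$.

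The key observation is that $M_{(2d)}$ and $M_{(d)}$ are the trivial one-dimensional representations of $\gs_{2d}$ and $\gs_d$. Using the construction of the Heisenberg product recalled in Section~\ref{section_construction} with $k=2d$ and $l=d$, the tensor product $M_{(2d)}\otimes M_{(d)}$ is the trivial $\gs_{2d}\times\gs_d$-module, and its restriction to the Young subgroup $\gs_{i-d}\times\gs_{3d-i}\times\gs_{i-2d}$ is again trivial. Therefore
\[ (M_{(2d)}\sharp M_{(d)})_i=\mathrm{Ind}_{\gs_{i-d}\times\gs_{3d-i}\times\gs_{i-2d}}^{\gs_i}(\C). \]
By Young's rule, this induction of the trivial representation from a Young subgroup decomposes as $\bigoplus_\nu K_{\nu,(i-d,3d-i,i-2d)}\,M_\nu$, where $K_{\nu,\mu}$ denotes the Kostka number counting semi-standard Young tableaux of shape $\nu$ with content $\mu$. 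Taking $i=|\nu|$ we obtain
\[ a_{(2d),(d)}^\nu=K_{\nu,\,(|\nu|-d,\,3d-|\nu|,\,|\nu|-2d)}. \]

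The proposition is then reduced to a direct combinatorial check of four Kostka numbers. For $\gamma\in\{(2),(1,1)\}$, $|d\gamma|=2d$ and the content is $(d,d,0)$; the unique SSYT of shape $(2d)$ is $d$ ones followed by $d$ twos, and the unique SSYT of shape $(d,d)$ has its first row filled with ones and its second row filled with twos. For $\gamma\in\{(3),(2,1)\}$, $|d\gamma|=3d$ and the content is $(2d,0,d)$; the row shape $(3d)$ only admits $2d$ ones followed by $d$ threes, and for the shape $(2d,d)$ the column-strictness together with the absence of twos forces the second row to consist of $d$ threes and the first row of $2d$ ones. Each of the four Kostka numbers equals $1$, so $a_{(2d),(d)}^{d\gamma}=1$ for all $d\in\Z_{>0}$ in each case, and Theorem~\ref{thm_stab_aguiar} delivers the Aguiar-stability of the four triples.

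I do not expect any substantive obstacle here. The whole content of the argument is concentrated in the initial observation that $\lambda=(2d)$ and $\mu=(d)$ yield trivial representations, which collapses the a priori intricate Heisenberg product into a single induction from a Young subgroup and reduces Aguiar-stability of these triples to the above elementary tableau count.
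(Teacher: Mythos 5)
Your proof is correct, and it follows a genuinely different route from the paper's. The paper applies Ying's formula (Proposition~\ref{prop_expr_aguiar}) expressing $a_{\lambda,\mu}^\nu$ as a sum of products of Littlewood--Richardson and Kronecker coefficients, then specialises $\lambda=(2d)$, $\mu=(d)$, $\nu=d\gamma$ and collapses the sum step by step using the Littlewood--Richardson rule and the vanishing of Kronecker coefficients involving one-row partitions; it carries this out explicitly only for $\gamma=(2)$ and declares the other three ``similar''. You instead observe that $M_{(2d)}$ and $M_{(d)}$ are trivial representations, so $(M_{(2d)}\sharp M_{(d)})_i$ is directly the induction of the trivial module from the Young subgroup $\gs_{i-d}\times\gs_{3d-i}\times\gs_{i-2d}$ of $\gs_i$, whence $a_{(2d),(d)}^\nu=K_{\nu,(|\nu|-d,\,3d-|\nu|,\,|\nu|-2d)}$ by Young's rule. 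This is more elementary (no Kronecker or Littlewood--Richardson coefficients are needed), gives a clean closed formula for \emph{all} Aguiar coefficients of the form $a_{(2d),(d)}^\nu$, and handles the four cases uniformly via an explicit tableau count. Your Kostka computations are all correct (for $\gamma=(2,1)$ the absence of $2$'s forces the second row to be all $3$'s by column strictness, pinning down the unique filling). A minor polish would be to note that Kostka numbers indexed by a composition equal those indexed by the sorted partition, and that the range $2d\le|\nu|\le 3d$ is exactly where the composition has nonnegative parts; both are standard and your argument implicitly respects them.
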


\begin{proof}
Let us write the proof in detail for $\big((2),(1);(2)\big)$, for instance. The three other ones work similarly. Let $d\in\Z_{>0}$. Then
\[ a_{d(2),d(1)}^{d(2)}=\sum_{\alpha,\rho,\tau,\beta,\eta,\delta}c_{\alpha,\beta}^{(2d)}\hphantom{a}c_{\eta,\rho}^{(d)}\hphantom{a}g_{\beta,\eta,\delta}\hphantom{a}c_{\alpha,\delta}^\tau\hphantom{a}c_{\tau,\rho}^{(2d)}. \]
But the Littlewood-Richardson rule (see for instance \cite[Section 5]{fulton3}) shows that the coefficient $c_{\alpha,\beta}^{(2d)}$ is zero unless $\alpha$ and $\beta$ have only one part, and $|\alpha|+|\beta|=2d$ (and then this coefficient is 1). As a consequence,
\[ a_{d(2),d(1)}^{d(2)}=\sum_{\begin{array}{c}\scriptstyle{\rho,\tau,\eta,\delta,}\\\scriptstyle{n\in\llbracket 0,2d\rrbracket}\end{array}}c_{\eta,\rho}^{(d)}\hphantom{a}g_{(n),\eta,\delta}\hphantom{a}c_{(2d-n),\delta}^\tau\hphantom{a}c_{\tau,\rho}^{(2d)}. \]
The same is true for the coefficient $c_{\eta,\rho}^{(d)}$ and the partitions $\eta$ and $\rho$. So
\[ a_{d(2),d(1)}^{d(2)}=\sum_{\begin{array}{c}\scriptstyle{\tau,\delta,}\\\scriptstyle{n\in\llbracket 0,2d\rrbracket}\\\scriptstyle{m\in\llbracket 0,d\rrbracket}\end{array}}g_{(n),(d-m),\delta}\hphantom{a}c_{(2d-n),\delta}^\tau\hphantom{a}c_{\tau,(m)}^{(2d)}. \]
And then the Kronecker coefficient $g_{(n),(d-m),\delta}$ is zero unless $n=d-m$. Moreover, if this is verified, $g_{(n),(n),\delta}$ is zero unless $\delta=(n)$ (and then this coefficient is 1). Hence
\[ a_{d(2),d(1)}^{d(2)}=\sum_{\begin{array}{c}\scriptstyle{\tau,}\\\scriptstyle{n\in\llbracket 0,d\rrbracket}\end{array}}c_{(2d-n),(n)}^\tau\hphantom{a}c_{\tau,(d-n)}^{(2d)}. \]
The coefficient $c_{(2d-n),(n)}^\tau$ is then zero unless $|\tau|=2d$. Furthermore, the other coefficient $c_{\tau,(d-n)}^{(2d)}$ is zero unless $|\tau|=2d-d+n=d+n$. So
\[ a_{d(2),d(1)}^{d(2)}=\sum_{\tau\vdash 2d}c_{(2d-d),(d)}^\tau\hphantom{a}c_{\tau,(d-d)}^{(2d)}=\sum_{\tau\vdash 2d}c_{(d),(d)}^\tau\hphantom{a}c_{\tau,(0)}^{(2d)}. \]
Finally this product is zero unless $\tau=(2d)$ (by the Littlewood-Richardson rule, for instance). Thus
\[ a_{d(2),d(1)}^{d(2)}=c_{(d),(d)}^{(2d)}\hphantom{a}c_{(2d),(0)}^{(2d)}=1, \]
and $\big((2),(1);(2)\big)$ is Aguiar-stable by Theorem \ref{thm_stab_aguiar}.
\end{proof}

\section{Some explicit bounds of stabilisation}\label{section_bounds_stabilisation}

\begin{de}
When $(\alpha,\beta;\gamma)$ is an Aguiar-stable triple, a bound of stabilisation for $(\alpha,\beta,\gamma)$ is, for any triple $(\lambda,\mu;\nu)$ of partitions, an integer $d_0\in\Z_{\geq 0}$ (depending on $\lambda$, $\mu$, and $\nu$) such that $a_{\lambda+d\alpha,\mu+d\beta}^{\nu+d\gamma}$ is constant for $d\geq d_0$.
\end{de}

In this section we are going to compute bounds of stabilisation for three examples of Aguiar-stable triples: $\big((1),(1);(1)\big)$, $\big((2),(1);(2)\big)$, and $\big((2),(1);(3)\big)$. The proof of Theorem \ref{thm_stab_aguiar} gives us a sufficient condition to obtain them: let us fix from now on an Aguiar-stable triple $(\alpha,\beta;\gamma)$ (we will specialise this triple later) and a triple $(\lambda,\mu;\nu)$ of partitions. We also consider vector spaces $V_1$ and $V_2$ as before (of dimension at least 2), and denote $V=V_1\oplus(V_1\otimes V_2)\oplus V_2$, such that
\[ a_{\alpha,\beta}^\gamma(=1)=\dim\h^0(X,\Lb)^G \qquad\text{and}\qquad a_{\lambda,\mu}^\nu=\dim\h^0(X,\Mb)^G, \]
with $G=\gl(V_1)\times\gl(V_2)$, $X=\fl(V_1)\times\fl(V_2)\times\fl(V)$, $\Lb=\Lb_\alpha\otimes\Lb_\beta\otimes\Lb_\gamma^*$, and $\Mb=\Lb_\lambda\otimes\Lb_\mu\otimes\Lb_\nu^*$. We fix finally a basis $\underline{e}=(e_1,\dots,e_{n_1})$ of $V_1$ and a basis $\underline{f}=(f_1,\dots,f_{n_2})$ of $V_2$. We now know that every $d_0\in\Z_{\geq 0}$ such that
\[ \forall d\geq d_0, \: X^{ss}\left(\Mb\otimes\Lb^{\otimes d}\right)\subset X^{ss}(\Lb) \]
is a bound of stabilisation.

\vspace{5mm}

One important tool for our computation is a numerical criterion of semi-stability known as the Hilbert-Mumford criterion:

\begin{de}
Let $Y$ be a projective variety on which a connected reductive group $H$ acts, and $\Nb$ an $H$-linearised line bundle on $Y$. Let $y\in Y$ and $\tau$ be a one-parameter subgroup of $H$ (denoted $\tau\in X_*(H)$). Since $Y$ is projective, $\lim\limits_{t\to 0}\tau(t).y$ exists. We denote it by $z$. This point is fixed by the image of $\tau$, and so $\C^*$ acts via $\tau$ on the fibre $\Nb_z$. Then there exists an integer $\mu^\Nb(y,\tau)$ such that, for all $t\in\C^*$ and $\tilde{z}\in\Nb_z$,
\[ \tau(t).\tilde{z}=t^{-\mu^\Nb(y,\tau)}\tilde{z}. \]
\end{de}

\noindent The Hilbert-Mumford criterion can then be stated as (see e.g. \cite[Lemma 2]{ressayre}):

\begin{prop}[Hilbert-Mumford Criterion]
In the settings of the previous definition, if in addition $\Nb$ is semi-ample, then:
\[ y\in Y^{ss}(\Nb)\quad\Longleftrightarrow\quad\forall\tau\in X_*(H), \; \mu^\Nb(y,\tau)\leq 0. \]
\end{prop}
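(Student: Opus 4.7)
The plan is to reduce the statement to the classical Hilbert-Mumford criterion for linear actions on a vector space, and then translate the conclusion back into the language of Mumford's numerical invariant $\mu^\Nb$. The easy direction is essentially formal; the hard one needs an $H$-equivariant projective embedding, made possible by the semi-ampleness of $\Nb$.

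First I would treat the implication $y\in Y^{ss}(\Nb)\Rightarrow\mu^\Nb(y,\tau)\leq 0$. Pick $d>0$ and an $H$-invariant section $\sigma\in\h^0(Y,\Nb^{\otimes d})^H$ with $\sigma(y)\neq 0$. For any $\tau\in X_*(H)$, $H$-invariance gives $\sigma(\tau(t).y)=\tau(t).\sigma(y)$, a path in $\Nb^{\otimes d}$ lying over $\tau(t).y$. The left-hand side is a regular section evaluated along $\tau(t).y$, so it extends continuously to $\sigma(z)$ as $t\to 0$. In a local trivialisation of $\Nb^{\otimes d}$ near $z$, the right-hand side reads $t^{-d\mu^\Nb(y,\tau)}$ times a nonzero element of the fiber. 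Existence of the limit forces $-d\mu^\Nb(y,\tau)\geq 0$, whence $\mu^\Nb(y,\tau)\leq 0$.

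For the converse, I would use that some power $\Nb^{\otimes k}$ is globally generated (semi-ampleness), producing an $H$-equivariant morphism $\varphi:Y\to\mathbb{P}(W)$ with $W=\h^0(Y,\Nb^{\otimes k})^*$ a rational $H$-module and $\varphi^*\mathcal{O}(1)=\Nb^{\otimes k}$. The standard GIT dictionary, identifying $H$-invariant sections of $\Nb^{\otimes kd}$ with $H$-invariant polynomials on $W$, shows that $y\in Y^{ss}(\Nb)$ if and only if, for some (equivalently any) nonzero lift $\tilde y\in W$ of $\varphi(y)$, the orbit closure $\overline{H.\tilde y}$ avoids the origin. For a fixed $\tau\in X_*(H)$, decompose $\tilde y=\sum_i\tilde y_i$ into weights $\tau(t).\tilde y_i=t^i\tilde y_i$; the limit $\lim_{t\to 0}\tau(t).[\tilde y]\in\mathbb{P}(W)$ is $[\tilde y_{i_0}]$ with $i_0:=\min\{i:\tilde y_i\neq 0\}$, and comparing the $\C^*$-weight on the fiber of $\mathcal{O}(1)$ there with the sign convention of the definition of $\mu^\Nb$ gives $k\mu^\Nb(y,\tau)=i_0$. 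With this dictionary in place, the classical linear Hilbert-Mumford criterion for the reductive group $H$ acting on $W$ --- stating that $0\in\overline{H.\tilde y}$ iff there is $\tau\in X_*(H)$ with every weight of $\tilde y$ strictly positive --- translates exactly into: $y\notin Y^{ss}(\Nb)$ iff there is $\tau$ with $\mu^\Nb(y,\tau)>0$, which is the contrapositive of what we want.

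The main obstacle is precisely this last ingredient. Producing a destabilising one-parameter subgroup when $0$ lies in the closure of a linear orbit is the deep content of the statement; it uses the reductivity of $H$ essentially and is usually proved via Iwahori's theorem on centralisers of semisimple elements, Kempf's instability argument, or the Kempf-Ness moment map approach. The remaining work is bookkeeping: matching the sign of $\mu^\Nb$ with the linear weights through $\varphi$, and verifying that semi-ampleness suffices for the dictionary between semi-stability of $y$ in $Y$ and linear semi-stability of any lift in $W$ (some care is needed when $\varphi$ is not a closed embedding, but this can be handled by restricting to its image or by passing to a further power of $\Nb$).
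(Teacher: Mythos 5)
The paper does not give a proof of this proposition; it is stated as a known result with a pointer to \cite[Lemma 2]{ressayre}, so there is no in-text argument to compare against. That said, your sketch is essentially the standard proof (as in Mumford's \emph{GIT}, Chapter 2, extended to the semi-ample case) and is correct in outline. Two small points worth tightening. In the easy direction, the phrase ``in a local trivialisation of $\Nb^{\otimes d}$ near $z$, the right-hand side reads $t^{-d\mu^\Nb(y,\tau)}$ times a nonzero element'' is only literally true for a $\tau$-equivariant trivialisation of $\Nb^{\otimes d}$ restricted to the $\tau$-stable curve $\overline{\{\tau(t).y\}}\simeq\mathbb{A}^1$; for an arbitrary trivialisation there is a unit factor, which however does not affect the conclusion $-d\mu^\Nb(y,\tau)\geq 0$. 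In the hard direction, the caveat you flag is indeed the crux: when $\varphi:Y\to\mathbb{P}(W)$ comes from a merely globally generated power $\Nb^{\otimes k}$, pulling back invariant sections gives $\varphi(y)$ semi-stable $\Rightarrow$ $y$ semi-stable, but the converse may fail for a fixed $k$ because $\h^0(\mathbb{P}(W),\mathcal{O}(d))\to\h^0(Y,\Nb^{\otimes kd})$ need not be surjective. Your fix of ``passing to a further power of $\Nb$'' is exactly right: given an invariant section $\sigma\in\h^0(Y,\Nb^{\otimes kd})^H$ nonvanishing at $y$, replace $k$ by $kd$ so that $\sigma$ itself becomes a linear invariant on the new $W'$; since $\mu^{\Nb^{\otimes m}}=m\,\mu^{\Nb}$, the numerical conclusion is unaffected. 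With these clarifications spelled out, your reduction to the linear Hilbert--Mumford criterion for the reductive group $H$ on $W$ is exactly the argument one would cite.
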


\noindent Following this property, a one-parameter subgroup $\tau$ such that $\mu^\Nb(y,\tau)$ will be said ``destabilising'' for $y$ (relatively to $\Nb$).

\vspace{5mm}

We will then begin the computation by considering the projection:
\vspace{-5mm}
\begin{changemargin}{-2mm}{-2mm}
\[ \begin{array}{rccl}
\pi: & X & \longrightarrow & \overline{X}=\pp(V_1)\times\pp(V_2)\times\pp(V^*)\\
 & \big((W_{1,i})_i,(W_{2,i})_i,(W'_i)_i\big) & \longmapsto & \big(W_{1,1},W_{2,1},\{\varphi\in V^*\,|\,\ker\varphi=W'_{n_1n_2+n_1+n_2-1}\}\big)
\end{array}. \]
\end{changemargin}
We also denote by $\overline{\Lb}$ the ample line bundle on $\overline{X}$ whose pull-back by $\pi$ is $\Lb$, by $\underline{e}^*=(e_1^*,\dots,e_{n_1}^*)$ and $\underline{f}^*=(f_1^*,\dots,f_{n_2}^*)$ the dual bases of $\underline{e}$ and $\underline{f}$ respectively, and set $n=\min(n_1,n_2)$.

\begin{prop}\label{prop_orbits}
Set $\varphi_n=\sum_{i=1}^n e_i^*\otimes f_i^*\in V_1^*\otimes V_2^*\simeq(V_1\otimes V_2)^*$. The $G$-orbit $\oo_0$ of $\overline{x}_0=\big(\C e_1,\C f_1,\C(e_1^*+e_{n_1}^*+f_1^*+f_{n_2}^*+\varphi_n)\big)\in\overline{X}$ is open in $\overline{X}$. Moreover, if we denote respectively by $\oo_1$, $\oo_2$, and $\oo_3$ the $G$-orbits in $\overline{X}$ of
\[ \overline{x}_1=\big(\C e_1,\C f_2,\C(e_1^*+e_{n_1}^*+f_2^*+f_{n_2}^*+\varphi_n)\big), \]
\[ \overline{x}_2=\big(\C e_1,\C f_1,\C(e_{n_1}^*+f_1^*+f_{n_2}^*+\varphi_n)\big), \]
and
\[ \overline{x}_3=\big(\C e_1,\C f_1,\C(e_1^*+e_{n_1}^*+f_{n_2}^*+\varphi_n)\big), \]
then
\[ \overline{\oo_1}\cup\overline{\oo_2}\cup\overline{\oo_3}=\Big\{\big(\C v_1,\C v_2,\C(\underset{\in V_1^*}{\underbrace{\varphi_1}}+\underset{\in V_2^*}{\underbrace{\varphi_2}}+\underset{\in(V_1\otimes V_2)^*}{\underbrace{\varphi}})\big)\in\overline{X}\,|\,\varphi_1(v_1)\varphi_2(v_2)\linebreak\varphi(v_1\otimes v_2)=0\Big\}. \]
In addition, among $\{\oo_1,\oo_2,\oo_3\}$, no orbit is contained in the closure of another one.
\end{prop}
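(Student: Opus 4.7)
My plan is to introduce the $G$-invariant polynomial
\[ \Phi(v_1, v_2, \varphi_1 + \varphi + \varphi_2) := \varphi_1(v_1)\,\varphi_2(v_2)\,\varphi(v_1 \otimes v_2) \]
(where $\varphi_1 \in V_1^*$, $\varphi_2 \in V_2^*$, $\varphi \in (V_1 \otimes V_2)^*$) and work with its vanishing locus, which descends to a $G$-invariant closed subvariety $Z \subset \overline{X}$, precisely the set on the right hand side of the claimed equality. Since $\Phi$ factors as a product of three irreducible multilinear forms, $Z$ splits as the union of three irreducible $G$-invariant hypersurfaces
\[ Z_1 = \{\varphi(v_1 \otimes v_2) = 0\},\quad Z_2 = \{\varphi_1(v_1) = 0\},\quad Z_3 = \{\varphi_2(v_2) = 0\}. \]

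For the openness of $\oo_0$, I would first substitute $\overline{x}_0$ into each of the three factors to check that they all equal $1$, placing $\overline{x}_0$ in the open $G$-stable set $\overline{X} \setminus Z$. I would then prove that the differential at the identity of the orbit map $G \to \overline{X}$, $g \mapsto g \cdot \overline{x}_0$, is surjective onto
\[ T_{\overline{x}_0}\overline{X} \cong V_1/\C e_1 \,\oplus\, V_2/\C f_1 \,\oplus\, V^*/\C\psi_0, \]
where $\psi_0 = e_1^* + e_{n_1}^* + f_1^* + f_{n_2}^* + \varphi_n$. The first two summands are trivially surjected onto by $\mathfrak{gl}(V_1)$ and $\mathfrak{gl}(V_2)$, so the content is to show that the contragredient Lie-algebra action on the third summand, whose value at $(X_1, X_2)$ is $-\varphi_1 \circ X_1 - \varphi \circ (X_1 \otimes \id + \id \otimes X_2) - \varphi_2 \circ X_2$, covers $V^*/\C\psi_0$. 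The specific choice of $\psi_0$, with its ``off-diagonal'' terms $e_{n_1}^*, f_{n_2}^*$ and the canonical pairing $\varphi_n$, is engineered so that this action is as large as possible.

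For the equality $\overline{\oo_1} \cup \overline{\oo_2} \cup \overline{\oo_3} = Z$, the same kind of tangent-space computation applied at each $\overline{x}_i$ would show that $\oo_i$ is open in the irreducible hypersurface $Z_i$, hence dense in it. Beforehand, one verifies by direct substitution that at each $\overline{x}_i$ exactly one of the three factors of $\Phi$ vanishes, namely the one cutting out $Z_i$. This places $\oo_i$ in $Z_i$ and rules it out of $Z_j$ for $j \neq i$, and yields $\overline{\oo_i} = Z_i$, from which the required union follows.

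The non-inclusion assertion then drops out immediately: since each $\overline{x}_j$ sits in $Z_j$ but not in $Z_i$ for $i \neq j$, and these conditions are $G$-invariant, no orbit $\oo_j$ can be contained in $\overline{\oo_i} = Z_i$. The hard part will be the tangent-space computations needed to establish that $\oo_0$ and each $\oo_i$ have the maximal possible dimension in $\overline{X}$ and $Z_i$ respectively; verifying the surjectivity of the contragredient Lie-algebra action on the $V^*$-factor in each of the four cases is a nontrivial linear-algebra exercise, and this is precisely where the somewhat ad hoc but carefully designed representatives $\overline{x}_0, \overline{x}_1, \overline{x}_2, \overline{x}_3$ earn their keep.
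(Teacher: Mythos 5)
Your plan takes a genuinely different route from the paper's. The paper classifies the relevant $G$-orbits structurally: it identifies the $(V_1\otimes V_2)^*$-component $\varphi$ with a linear map $\varphi':V_1\to V_2^*$, stratifies by its rank, argues that lower-rank orbits fall into the closures of the maximal-rank ones, and then distinguishes orbits by a finite list of incidence conditions among subspaces such as $\C v_1$, $\ker\varphi_1$, $\ker\varphi'_n$; the four explicit points $\overline{x}_i$ are then simply checked against those conditions. You instead factor the $G$-invariant section $\Phi=\varphi_1(v_1)\,\varphi_2(v_2)\,\varphi(v_1\otimes v_2)$ into three irreducible pieces, observe that each $Z_i$ is an irreducible hypersurface, and aim to show $\oo_i$ is dense in $Z_i$ by an infinitesimal surjectivity argument, from which the union and the mutual non-inclusion follow formally. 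Your route is cleaner in one respect: the paper never explicitly rules out lower-dimensional orbits escaping the three closures, whereas $\overline{\oo_i}=Z_i$ makes that automatic. The paper's route, on the other hand, exhibits the concrete orbit poset inside $Z$, which you do not recover. Two points to firm up in your version: you should actually prove the irreducibility of each factor of $\Phi$ (straightforward for a multilinear form of multidegree $(1,1,1)$ or $(1,0,1)$, but it is used), and the tangent-space surjectivity computations you defer are not optional — they are the entire content of ``$\oo_0$ is open'' and ``$\oo_i$ is dense in $Z_i$''. Also note that at $\overline{x}_i$ ($i\ge 1$) you need surjectivity onto $T_{\overline{x}_i}Z_i$ rather than $T_{\overline{x}_i}\overline{X}$, so you should in passing verify that $\overline{x}_i$ is a smooth point of $Z_i$ (the defining trilinear or bilinear form has nonzero differential there), a step your sketch omits. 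With those additions the argument would be complete, and is at least as rigorous as the paper's own sketch.
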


\begin{proof}
We consider an element $\big(\C v_1,\C v_2,\C(\varphi_1+\varphi_2+\varphi)\big)\in\overline{X}$. Similarly to the proof of Proposition 3.3 from \cite{article1}, we are only interested in the orbits of maximal dimension or of dimension just below that. Then, considering the usual isomorphism $(V_1\otimes V_2)^*\simeq\Hom(V_1,V_2^*)$, we say that $\varphi$ corresponds to a linear map $\varphi':V_1\rightarrow V_2^*$, on which $G$ acts by conjugation. As a consequence we only need to consider the case when $\varphi'$ is of maximal rank ($n$, that is), since all the orbits with $\varphi'$ of lower rank will be contained in the closure of such an orbit.

\vspace{5mm}

Thus we rather consider an element $\overline{x}=\big(\C v_1,\C v_2,\C(\varphi_1+\varphi_2+\varphi_n)\big)\in\overline{X}$, with $\varphi_n=\sum_{i=1}^n e_i^*\otimes f_i^*\in V_1^*\otimes V_2^*\simeq(V_1\otimes V_2)^*$, corresponding to a linear map $\varphi'_n:V_1\rightarrow V_2^*$. Then the linear maps $\varphi_1,\varphi_2,\varphi'_n,\varphi_n$, together with the vectors $v_1$ and $v_2$, give some vector subspaces of $V_1$, $V_2$, and $V_1\otimes V_2$, whose relative positions will give us descriptions of the orbits we are interested in:
\begin{itemize}
\item in $V_1$ : $\C v_1$, $\ker\varphi'_n\subset(\varphi'_n)^{-1}\big((\C v_2)^\perp\big)$, and $\ker\varphi_1$;
\item in $V_2$ : $\C v_2$, $\ker{}^t\varphi'_n\subset({}^t\varphi'_n)^{-1}\big((\C v_1)^\perp\big)$, and $\ker\varphi_2$;
\item in $V_1\otimes V_2$ : $\C v_1\otimes v_2$ and $\ker\varphi_n$.
\end{itemize}

\vspace{5mm}

Then we see that there is an open orbit, $\oo_0$, characterised by:
\begin{itemize}
\item $\varphi_1(v_1)\neq 0$, $\varphi'_n(v_1)\neq 0$, $\ker\varphi'_n\not\subset\ker\varphi_1$ (or rather, if $n=n_1$, $(\varphi'_n)^{-1}\big((\C v_2)^\perp\big)\not\subset\ker\varphi_1$), and $\ker\varphi_1\not\subset(\varphi'_n)^{-1}\big((\C v_2)^\perp\big)$;
\item $\varphi_2(v_2)\neq 0$, ${}^t\varphi'_n(v_2)\neq 0$, $\ker{}^t\varphi'_n\not\subset\ker\varphi_2$ (or rather, if $n=n_2$, $({}^t\varphi'_n)^{-1}\big((\C v_1)^\perp\big)\not\subset\ker\varphi_2$), and $\ker\varphi_2\not\subset({}^t\varphi'_n)^{-1}\big((\C v_1)^\perp\big)$;
\item $\varphi_n(v_1\otimes v_2)\neq 0$.
\end{itemize}
And the point $\overline{x}_0$ given above verifies all these conditions.

\vspace{5mm}

Finally the subset $\Big\{\big(\C v_1,\C v_2,\C(\varphi_1+\varphi_2+\varphi)\big)\in\overline{X}\,|\,\varphi_1(v_1)\varphi_2(v_2)\varphi(v_1\otimes v_2)=0\Big\}$ can be written as $\overline{\oo_1}\cup\overline{\oo_2}\cup\overline{\oo_3}$ for three orbits $\oo_1$, $\oo_2$, and $\oo_3$, characterised by the same equations as $\oo_0$ except for:
\begin{itemize}
\item $\varphi_n(v_1\otimes v_2)=0$ for $\oo_1$,
\item $\varphi_1(v_1)=0$ for $\oo_2$,
\item $\varphi_2(v_2)=0$ for $\oo_3$.
\end{itemize}
Then it is easy to check that $\overline{x}_1\in\oo_1$, $\overline{x}_2\in\oo_2$, and $\overline{x}_3\in\oo_3$.
\end{proof}

\subsection{Murnaghan case and comparison with the results by Li Ying}

We deal in this section with the case of $(\alpha,\beta;\gamma)=\big((1),(1);(1)\big)$, which is the one treated by Li Ying in \cite{ying}. Since the triple $\big((1),(1),(1)\big)$ is also a stable triple for Kronecker coefficients, well-studied and first observed by Francis Murnaghan in \cite{murnaghan}, we often refer to it as the ``Murnaghan case''. In that case $\overline{\Lb}=\oo(1)\otimes\oo(1)\otimes\oo(1)$. Moreover, since $\dim\h^0(\overline{X},\overline{\Lb}^{\otimes d})^G=1$ for any $d\in\Z_{>0}$ and since $\big(\C v_1,\C v_2,\C(\varphi_1+\varphi_2+\varphi)\big)\in\overline{X}\mapsto\varphi(v_1\otimes v_2)$ gives a non-zero $G$-invariant section of $\overline{\Lb}$ on $\overline{X}$,
\[ \overline{X}^{us}(\overline{\Lb})=\Big\{\big(\C v_1,\C v_2,\C(\varphi_1+\varphi_2+\varphi)\big)\in\overline{X}\,|\,\varphi(v_1\otimes v_2)=0\Big\}. \]
Thus, according to Proposition \ref{prop_orbits} (and its proof),
\[ \overline{X}^{us}(\overline{\Lb})=\overline{\oo_1}=\overline{G.\overline{x}_1}. \]

\vspace{5mm}

In the group $G=\gl(V_1)\times\gl(V_2)$ we consider the maximal torus $T$ of elements whose matrices in the bases $\underline{e}$ and $\underline{f}$ are diagonal. Let us give a practical notation for one-parameter subgroups of $T$: such a subgroup is of the form
\[ \begin{array}{rccl}
\tau: & \C^* & \longrightarrow & T\\
 & t & \longmapsto & (\begin{pmatrix}
 t^{a_1} &&&\\
 & t^{a_2} &&\\
 && \ddots &\\
 &&& t^{a_{n_1}}
 \end{pmatrix},\begin{pmatrix}
 t^{b_1} &&&\\
 & t^{b_2} &&\\
 && \ddots &\\
 &&& t^{b_{n_2}}
 \end{pmatrix}
\end{array}, \]
with $a_1,\dots,a_{n_1},b_1,\dots,b_{n_2}$ integers, and will be denoted by $\tau=(a_1,\dots,a_{n_1}\,|\,b_1,\dots,b_{n_2})$.

\vspace{5mm}

Then we see that the one-parameter subgroup $\tau_1=\left.\big(1,0,\dots,0\text{ }\right|0,1,0,\dots,0\big)$ of $T$ (hence of $G$) is destabilising for $\overline{x}_1$ : $\mu^{\overline{\Lb}}(\overline{x}_1,\tau_1)=1$. Moreover a -- not so complicated -- calculation (see \cite[Section 3.2.4]{article1} for details on a computation completely similar) yields:
\[ \max_{x\in\pi^{-1}(\overline{x}_1)}\left(-\mu^\Mb(x,\tau_1)\right)=-\lambda_1-\mu_1+\nu_1+2\nu_2+\sum_{k=1}^{n_1+n_2-1}\nu_{k+2}. \]
It follows that:
\begin{theo}\label{thm_bound_murnaghan_aguiar}
The sequence $\left(a_{\lambda+(d),\mu+(d)}^{\nu+(d)}\right)_{d\in\Z_{\geq 0}}$ is constant when $d\geq -\lambda_1-\mu_1+\nu_1+2\nu_2+\dps\sum_{k=1}^{n_1+n_2-1}\nu_{k+2}$.
\end{theo}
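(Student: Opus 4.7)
The plan is to verify the sufficient condition extracted from the proof of Theorem~\ref{thm_stab_aguiar}: it will be enough to find a $d_0$ such that $X^{ss}(\Mb\otimes\Lb^{\otimes d})\subset X^{ss}(\Lb)$ for all $d\geq d_0$. Since $\Lb=\pi^{*}\overline{\Lb}$ with $\overline{\Lb}$ ample on $\overline{X}$, we have $X^{us}(\Lb)=\pi^{-1}\bigl(\overline{X}^{us}(\overline{\Lb})\bigr)$, and the discussion of the Murnaghan case above identifies $\overline{X}^{us}(\overline{\Lb})$ with $\overline{\oo_1}$. So the task becomes: for every $x\in\pi^{-1}(\overline{\oo_1})$, exhibit a one-parameter subgroup of $G$ that is destabilising for $x$ with respect to $\Mb\otimes\Lb^{\otimes d}$ as soon as $d$ meets the stated bound.

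The natural candidate is $\tau_1$ (and its $G$-conjugates). For $x\in\pi^{-1}(\overline{x}_1)$, the additivity of $\mu$ in the linearisation and the identity $\mu^{\Lb}(x,\tau_1)=\mu^{\overline{\Lb}}(\overline{x}_1,\tau_1)=1$ give
\[ \mu^{\Mb\otimes\Lb^{\otimes d}}(x,\tau_1)\;=\;\mu^{\Mb}(x,\tau_1)+d. \]
The computation recalled just before the theorem bounds $-\mu^{\Mb}(x,\tau_1)$ for $x\in\pi^{-1}(\overline{x}_1)$ by exactly the quantity appearing in the statement, so once $d$ reaches this maximum the subgroup $\tau_1$ becomes destabilising for every point of $\pi^{-1}(\overline{x}_1)$. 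The $G$-equivariance identity $\mu^{\Nb}(g\cdot y, g\tau g^{-1})=\mu^{\Nb}(y,\tau)$ then propagates the conclusion to $\pi^{-1}(G\cdot\overline{x}_1)$ via the conjugates $g\tau_1 g^{-1}$.

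The step I expect to be the main technical obstacle is extending the argument from the open orbit $G\cdot\overline{x}_1$ to its closure $\overline{\oo_1}$. One route is, for each $\overline{x}\in\overline{\oo_1}\setminus G\cdot\overline{x}_1$, to pick a sequence $g_n\cdot\overline{x}_1\to\overline{x}$ and analyse the conjugates $g_n\tau_1 g_n^{-1}$ inside a suitable compactification of $X_{*}(T)$, extracting in the limit a destabilising one-parameter subgroup for $\overline{x}$. A second route is to exploit the stratified description of $\overline{\oo_1}$ furnished by Proposition~\ref{prop_orbits}: each boundary stratum is a $G$-orbit of strictly smaller dimension, on which a degenerated destabilising one-parameter subgroup can be written down explicitly. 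In either approach the crucial observation is that the weights of $\Mb$ entering the numerical estimate depend only on $\lambda$, $\mu$, $\nu$, so the same bound survives the passage to the closure and yields the stabilisation threshold asserted in the theorem.
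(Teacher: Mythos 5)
Your setup is correct and matches the paper's: reduce to the inclusion $X^{us}(\Lb)\subset X^{us}(\Mb\otimes\Lb^{\otimes d})$, compute $\mu^{\Mb\otimes\Lb^{\otimes d}}(x,\tau_1)=\mu^\Mb(x,\tau_1)+d$ for $x\in\pi^{-1}(\overline{x}_1)$, and propagate to $\pi^{-1}(G\cdot\overline{x}_1)$ by conjugating $\tau_1$. However, the step you single out as ``the main technical obstacle'' --- extending from $\pi^{-1}(G\cdot\overline{x}_1)$ to all of $\pi^{-1}(\overline{\oo_1})=X^{us}(\Lb)$ --- is in fact trivial, and neither of your two proposed routes is needed. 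The unstable locus $X^{us}(\Mb\otimes\Lb^{\otimes d})$ is closed in $X$ (it is the complement of the open set of semi-stable points), so once the dense subset $\pi^{-1}(G\cdot\overline{x}_1)$ of $X^{us}(\Lb)$ is shown to lie inside it, so does its closure $X^{us}(\Lb)$. There is no need to extract limiting one-parameter subgroups along degenerating sequences $g_n\tau_1 g_n^{-1}$, nor to write down explicit destabilising subgroups on the boundary strata of $\overline{\oo_1}$; inventing either of these would be doing far more work than the problem requires.

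There is also a genuine gap at the threshold. Your phrase ``once $d$ reaches this maximum'' glosses over the distinction between $d>d_0$ and $d\geq d_0$. For $d=d_0$ the inequality becomes $\mu^{\Mb\otimes\Lb^{\otimes d}}(x,\tau_1)\geq 0$, which by the Hilbert--Mumford criterion does \emph{not} suffice for instability (one needs strict positivity). The argument as you wrote it only yields constancy for $d>d_0$. The paper addresses exactly this with a separate quasipolynomiality argument (from \cite[Section~3.4]{article1}) to push the bound down to $d\geq d_0$; your proposal is silent on this point, so as written it proves a weaker statement than the theorem claims.
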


\begin{proof}
Set $d_0=-\lambda_1-\mu_1+\nu_1+2\nu_2+\dps\sum_{k=1}^{n_1+n_2-1}\nu_{k+2}$. Then, for all $x\in\pi^{-1}(\overline{x}_1)$ and all $d>d_0$,
\[ \mu^{\Mb\otimes\Lb^{\otimes d}}(x,\tau_1)=\mu^\Mb(x,\tau_1)+d\mu^{\overline{\Lb}}(\overline{x}_1,\tau_1)=\mu^\Mb(x,\tau_0)+d>0 \]
because $d>d_0\geq -\mu^\Mb(x,\tau_1)$. Thus, by the Hilbert-Mumford criterion, $x\in X^{us}\left(\Mb\otimes\Lb^{\otimes d}\right)$, and $\pi^{-1}(G.\overline{x}_1)\subset X^{us}\left(\Mb\otimes\Lb^{\otimes d}\right)$ since $\pi$ is $G$-equivariant. As a consequence, since $\pi^{-1}(G.\overline{x}_1)$ is dense in $X^{us}(\Lb)$ (because $G.\overline{x}_1$ is dense in $\overline{X}^{us}(\overline{\Lb})$) and since $X^{us}\left(\Mb\otimes\Lb^{\otimes d}\right)$ is closed, $X^{us}(\Lb)\subset X^{us}\left(\Mb\otimes\Lb^{\otimes d}\right)$. That shows why the sequence $\left(a_{\lambda+(d),\mu+(d)}^{\nu+(d)}\right)_{d\in\Z_{\geq 0}}$ is constant when $d>d_0$. To justify the fact that it is constant as soon as $d=d_0$, we use an argument of quasipolynomiality detailed in \cite[Section 3.4]{article1}.
\end{proof}

\paragraph{Recovering Li Ying's bound with our method:} It is possible by choosing a different one-parameter subgroup destabilising $\overline{x}_1$: if we consider the one-parameter subgroup
\[ \tau'_1=\left.\big(2,0,1,\dots,1\text{ }\right|0,2,1,\dots,1\big), \]
it destabilises $\overline{x}_1$: $\mu^{\overline{\Lb}}(\overline{x}_1,\tau'_1)=2$. Moreover,
\begin{changemargin}{-4mm}{-4mm}
\[ \begin{array}{rcl}
\dps\max_{x\in\pi^{-1}(\overline{x}_1)}\left(-\dps\mu^\Mb(x,\tau'_1)\right) & = & -2\lambda_1-\lambda_3-\dots-\lambda_{n_1}-2\mu_1-\mu_3-\dots-\mu_{n_2}+2\nu_1+4\nu_2\\
 & & +3(\nu_3+\dots+\dps\nu_{n_1+n_2-2}) +2(\nu_{n_1+n_2-1}+\dots+\nu_{n_1n_2-n_1-n_2+5})\\
 & & +\dps\nu_{n_1n_2-n_1-n_2+6}+\dots+\nu_{n_1n_2+n_1+n_2-3},
\end{array} \]
\end{changemargin}
which gives even a slight improvement of Li Ying's bound for ``long'' partitions $\nu$ (i.e. of length $>n_1+n_2-2$), according to the previous expression of this bound (see Proposition \ref{bound_ying}).

\paragraph{Examples:} The bound of Theorem \ref{thm_bound_murnaghan_aguiar} is for instance 15 for the triple $\big((7,3),\linebreak(5,4,2);(6,6,5,4)\big)$, whereas Li Ying's (cf Corollary \ref{bound_ying}) is 18 (note that the improved bound obtained right above is 17). But the contrary is also possible: for the triple $\big((3,3,3),(4,3,2,1);(5,4,1)\big)$, Li Ying's bound is 4 whereas ours is 7.

\subsection{Two other cases}

\noindent \textbf{For $\boldsymbol{(\alpha,\beta;\gamma)=\big((2),(1);(2)\big)}$:} Then $\overline{\Lb}=\oo(2)\otimes\oo(1)\otimes\oo(2)$ and a non-zero $G$-invariant section of $\overline{\Lb}$ on $\overline{X}$ is given by $\C(\varphi_1+\varphi_2+\varphi)\big)\in\overline{X}\mapsto\varphi_1(v_1)\varphi(v_1\otimes v_2)$. As a consequence,
\[ \overline{X}^{us}(\overline{\Lb})=\{\big(\C v_1,\C v_2,\C(\varphi_1+\varphi_2+\varphi)\big)\in\overline{X}\,|\,\varphi_1(v_1)\varphi(v_1\otimes v_2)=0\}=\overline{\oo_1}\cup\overline{\oo_2}, \]
thanks to Proposition \ref{prop_orbits} and its proof.\\
Then we take the same $\tau_1$ as before to destabilise $\overline{x}_1$ (still $\mu^{\overline{\Lb}}(\overline{x}_1,\tau_1)=1$), and $\tau_2=\left.\big(1,0,\dots,0\text{ }\right|-1,0,\dots,0\big)$ which destabilises $\overline{x}_2$: $\mu^{\overline{\Lb}}(\overline{x}_2,\tau_2)=1$. Finally,
\[ \max_{x\in\pi^{-1}(\overline{x}_2)}\left(-\mu^\Mb(x,\tau_2)\right)=-\lambda_1+\mu_1+\sum_{k=1}^{n_2}\nu_{k+1}-\sum_{k=1}^{n_1}\nu_{n_1n_2+n_1+n_2+1-k}. \]

\begin{theo}
The sequence $\left(a_{\lambda+(2d),\mu+(d)}^{\nu+(2d)}\right)_{d\in\Z_{\geq 0}}$ is constant when
\[ d\geq -\lambda_1+ \max\left(-\mu_1+\nu_1+2\nu_2+\dps\sum_{k=1}^{n_1+n_2-1}\nu_{k+2},\mu_1+\sum_{k=1}^{n_2}\nu_{k+1}- \sum_{k=1}^{n_1}\nu_{n_1n_2+n_1+n_2+1-k}\right). \]
\end{theo}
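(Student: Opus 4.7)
The plan is to follow the template of the proof of Theorem \ref{thm_bound_murnaghan_aguiar} essentially verbatim, with the single modification that $\overline{X}^{us}(\overline{\Lb})$ now decomposes as the union $\overline{\oo_1}\cup\overline{\oo_2}$ of two orbit closures, so we need one destabilising one-parameter subgroup per orbit. Since $\pi$ is $G$-equivariant, this translates to the equality
\[ X^{us}(\Lb)=\pi^{-1}(\overline{\oo_1})\cup\pi^{-1}(\overline{\oo_2}), \]
and it suffices to show that each piece is contained in $X^{us}(\Mb\otimes\Lb^{\otimes d})$ as soon as $d$ exceeds the bound of the statement.

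For each $i\in\{1,2\}$, the one-parameter subgroup $\tau_i$ introduced in the text destabilises $\overline{x}_i$ with $\mu^{\overline{\Lb}}(\overline{x}_i,\tau_i)=1$. Consequently, for any lift $x\in\pi^{-1}(\overline{x}_i)$,
\[ \mu^{\Mb\otimes\Lb^{\otimes d}}(x,\tau_i)=\mu^\Mb(x,\tau_i)+d, \]
which is strictly positive as soon as $d>-\mu^\Mb(x,\tau_i)$. By the Hilbert-Mumford criterion, it is therefore enough to have
\[ d>\max\Bigl(\max_{x\in\pi^{-1}(\overline{x}_1)}\bigl(-\mu^\Mb(x,\tau_1)\bigr),\ \max_{x\in\pi^{-1}(\overline{x}_2)}\bigl(-\mu^\Mb(x,\tau_2)\bigr)\Bigr). \]
The first inner maximum was already computed in the Murnaghan case (the subgroup $\tau_1$ and the line bundle $\Mb$ are identical) and equals $-\lambda_1-\mu_1+\nu_1+2\nu_2+\sum_{k=1}^{n_1+n_2-1}\nu_{k+2}$; the second is the expression displayed just before the theorem, namely $-\lambda_1+\mu_1+\sum_{k=1}^{n_2}\nu_{k+1}-\sum_{k=1}^{n_1}\nu_{n_1n_2+n_1+n_2+1-k}$. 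Factoring $-\lambda_1$ out of the outer maximum recovers the claimed bound up to strictness of the inequality.

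From $\overline{x}_i$ to $\pi^{-1}(\overline{\oo_i})$, the extension is the same two-step argument as in Theorem \ref{thm_bound_murnaghan_aguiar}: $G$-equivariance of $\pi$ and of $\mu^{\bullet}(\cdot,g\tau_ig^{-1})$ covers the whole open orbit $\pi^{-1}(G.\overline{x}_i)$, and closedness of $X^{us}(\Mb\otimes\Lb^{\otimes d})$ together with density of $G.\overline{x}_i$ in $\overline{\oo_i}$ extends the inclusion to the whole closure. Taking the union over $i=1,2$ gives $X^{us}(\Lb)\subset X^{us}(\Mb\otimes\Lb^{\otimes d})$, and hence the constancy of the sequence, first for $d$ strictly above the bound; the quasipolynomiality argument of \cite[Section 3.4]{article1} then removes the strictness, yielding the exact statement.

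The main obstacle is the explicit computation of $\max_{x\in\pi^{-1}(\overline{x}_2)}(-\mu^\Mb(x,\tau_2))$. Concretely one decomposes $V_1\oplus(V_1\otimes V_2)\oplus V_2$ into $\tau_2$-weight spaces (eigenvalues $+1$, $0$, $-1$ with multiplicities $n_2$, $n_1n_2$, $n_1$ respectively), refines each of the three factors of $\pi^{-1}(\overline{x}_2)\subset\fl(V_1)\times\fl(V_2)\times\fl(V)$ into a complete flag adapted to these weights and optimising the pairing against $\lambda$, $\mu$, and $-\nu$, and then sums the weighted flag contributions. This is mechanical but delicate, especially the pairing of the tail of $\nu$ with the $-1$-eigenspace of $\tau_2$ on $V$; the bookkeeping is identical in spirit to the one carried out in detail in \cite[Section 3.2.4]{article1} for the Murnaghan subgroup.
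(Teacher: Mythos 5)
Your proof correctly elaborates the paper's own argument, which the paper only summarizes as ``completely similar to Theorem~\ref{thm_bound_murnaghan_aguiar}''. You correctly identify the single new ingredient: since $\overline{X}^{us}(\overline{\Lb})=\overline{\oo_1}\cup\overline{\oo_2}$, you need both destabilising one-parameter subgroups $\tau_1,\tau_2$ and the bound is the maximum of the two thresholds, with the remaining steps (pullback of $\mu$ along $\pi$, $G$-equivariance, closedness of the unstable locus, and the quasipolynomiality argument to pass from strict to non-strict inequality) carried over verbatim from the Murnaghan case.
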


\begin{proof}
Completely similar to Theorem \ref{thm_bound_murnaghan_aguiar}.
\end{proof}

\vspace{5mm}

\noindent \textbf{For $\boldsymbol{(\alpha,\beta;\gamma)=\big((2),(1);(3)\big)}$:} Then $\overline{\Lb}=\oo(2)\otimes\oo(1)\otimes\oo(3)$ and a non-zero $G$-invariant section of $\overline{\Lb}$ on $\overline{X}$ is given by $\C(\varphi_1+\varphi_2+\varphi)\big)\in\overline{X}\mapsto\varphi_1(v_1)\varphi_1(v_1)\varphi_2(v_2)$. As a consequence,
\[ \overline{X}^{us}(\overline{\Lb})=\{\big(\C v_1,\C v_2,\C(\varphi_1+\varphi_2+\varphi)\big)\in\overline{X}\,|\,\varphi_1(v_1)\varphi_2(v_2)=0\}=\overline{\oo_2}\cup\overline{\oo_3}. \]
Then we take the same $\tau_2$ as before to destabilise $\overline{x}_2$ (still $\mu^{\overline{\Lb}}(\overline{x}_2,\tau_2)=1$), and $\tau_3=\left.\big(-3,-2,\dots,-2\text{ }\right|1,0,\dots,0,-2\big)$ which destabilises $\overline{x}_3$: $\mu^{\overline{\Lb}}(\overline{x}_3,\tau_3)=1$. Finally,
\vspace{-5mm}
\begin{changemargin}{-1mm}{-1mm}
\[ \begin{array}{rcl}
\max_{x\in\pi^{-1}(\overline{x}_3)}\left(-\mu^\Mb(x,\tau_3)\right) & = & 3\lambda_1+2\dps\sum_{k=1}^{n_1-1}\lambda_{k+1}-\mu_1+2\mu_2-2\nu_1+\nu_2-\sum_{k=1}^{n_1-1}\nu_{n_2+k}\\
 & & -2 \dps\sum_{k=1}^{n_1n_2-n_1-n_2+2}\nu_{n_1+n_2-1+k}-3\sum_{k=1}^{n_2-1}\nu_{n_1n_2+1+k}\\
 & & -4\dps\sum_{k=1}^{n_1-1}\nu_{n_1n_2+n_2+k}-5\nu_{n_1n_2}.
\end{array} \]
\end{changemargin}

\begin{theo}
The sequence $\left(a_{\lambda+(2d),\mu+(d)}^{\nu+(3d)}\right)_{d\in\Z_{\geq 0}}$ is constant when
\[ d\geq\max\left(-\lambda_1+\mu_1+\sum_{k=1}^{n_2}\nu_{k+1}-\sum_{k=1}^{n_1}\nu_{n_1n_2+n_1+n_2+1-k}, \max_{x\in\pi^{-1}(\overline{x}_3)}\left(-\mu^\Mb(x,\tau_3)\right) \right). \]
\end{theo}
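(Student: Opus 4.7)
The plan is to mimic closely the proof of Theorem \ref{thm_bound_murnaghan_aguiar}, adapted to the fact that the unstable locus $\overline{X}^{us}(\overline{\Lb})$ now has two "maximal" pieces, namely $\overline{\oo_2}$ and $\overline{\oo_3}$, instead of a single one. Set
\[ d_0=\max\!\left(-\lambda_1+\mu_1+\sum_{k=1}^{n_2}\nu_{k+1}-\sum_{k=1}^{n_1}\nu_{n_1n_2+n_1+n_2+1-k},\ \max_{x\in\pi^{-1}(\overline{x}_3)}\!\left(-\mu^\Mb(x,\tau_3)\right)\right). \]
Following the argument at the end of Section~\ref{section_new_stability_results}, it suffices to show that $X^{ss}\!\left(\Mb\otimes\Lb^{\otimes d}\right)\subset X^{ss}(\Lb)$ for every $d\geq d_0$; equivalently $X^{us}(\Lb)\subset X^{us}\!\left(\Mb\otimes\Lb^{\otimes d}\right)$.

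The first step is to handle the two orbits separately via Hilbert--Mumford. For any $x\in\pi^{-1}(\overline{x}_2)$ and $d>d_0$, the additivity $\mu^{\Mb\otimes\Lb^{\otimes d}}(x,\tau_2)=\mu^\Mb(x,\tau_2)+d\,\mu^{\overline{\Lb}}(\overline{x}_2,\tau_2)=\mu^\Mb(x,\tau_2)+d$ combined with $d>d_0\geq -\mu^\Mb(x,\tau_2)$ (using the first entry of the max, which is exactly the value computed for $\tau_2$ in the previous theorem) gives $\mu^{\Mb\otimes\Lb^{\otimes d}}(x,\tau_2)>0$, hence $x\in X^{us}\!\left(\Mb\otimes\Lb^{\otimes d}\right)$. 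Exactly the same computation with $\tau_3$ and the second entry of the max yields the corresponding statement for every $x\in\pi^{-1}(\overline{x}_3)$.

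Next, I would promote these pointwise statements to the full unstable locus. Since $\pi$ is $G$-equivariant and $X^{us}\!\left(\Mb\otimes\Lb^{\otimes d}\right)$ is $G$-stable, the two inclusions extend to $\pi^{-1}(G\cdot\overline{x}_2)$ and $\pi^{-1}(G\cdot\overline{x}_3)$. Proposition~\ref{prop_orbits} gives $\overline{X}^{us}(\overline{\Lb})=\overline{\oo_2}\cup\overline{\oo_3}$ and the orbits $\oo_2,\oo_3$ are open in their respective closures; hence $\pi^{-1}(G.\overline{x}_2)\cup\pi^{-1}(G.\overline{x}_3)$ is dense in $X^{us}(\Lb)=\pi^{-1}\!\left(\overline{X}^{us}(\overline{\Lb})\right)$. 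Since the unstable locus on the left is closed, we obtain $X^{us}(\Lb)\subset X^{us}\!\left(\Mb\otimes\Lb^{\otimes d}\right)$ for all $d>d_0$, which gives constancy of the sequence for $d>d_0$.

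The final step is to absorb the boundary value $d=d_0$ into the constancy, exactly as in Theorem~\ref{thm_bound_murnaghan_aguiar}: by the quasipolynomiality argument recalled in \cite[Section 3.4]{article1}, a sequence of this form that is constant for $d>d_0$ is already constant at $d_0$. The only point that requires a real computation -- and thus the main potential obstacle -- is the evaluation of $\max_{x\in\pi^{-1}(\overline{x}_3)}\!\left(-\mu^\Mb(x,\tau_3)\right)$ with the specific $\tau_3=(-3,-2,\dots,-2\,|\,1,0,\dots,0,-2)$; this is however a mechanical (if tedious) weight computation identical in nature to the one carried out in \cite[Section 3.2.4]{article1}, and its result is already recorded above the statement, so no further work is needed here.
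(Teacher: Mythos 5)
Your proposal is correct and follows exactly the route the paper has in mind: the paper merely says the proof is ``once again similar to Theorem \ref{thm_bound_murnaghan_aguiar}'', and you have faithfully expanded it, correctly handling the only new wrinkle (the unstable locus $\overline{X}^{us}(\overline{\Lb})=\overline{\oo_2}\cup\overline{\oo_3}$ now has two irreducible components, so one destabilises each of $\overline{x}_2$ and $\overline{x}_3$ separately, takes the max of the two resulting bounds, and uses density of $\pi^{-1}(G.\overline{x}_2)\cup\pi^{-1}(G.\overline{x}_3)$ in $X^{us}(\Lb)$ together with closedness of $X^{us}(\Mb\otimes\Lb^{\otimes d})$). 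The reuse of the already-computed $\max_{x\in\pi^{-1}(\overline{x}_2)}(-\mu^\Mb(x,\tau_2))$ from the $\big((2),(1);(2)\big)$ case, the additivity of $\mu^{\bullet}(\cdot,\tau)$, and the quasipolynomiality argument to include $d=d_0$ are all exactly what the paper intends.
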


\begin{proof}
Once again similar to Theorem \ref{thm_bound_murnaghan_aguiar}.
\end{proof}

\bibliographystyle{alpha}
\bibliography{Biblio_these}
\end{document}